\crefname{bsp}{Example}{Examples}
\crefname{prop}{Proposition}{Propositions}
\crefname{bsp}{example}{examples}
\crefname{prop}{proposition}{propositions}
\Crefname{bsp}{Example}{Examples}
\Crefname{prop}{Proposition}{Propositions}
\numberwithin{equation}{section}
\theoremstyle{plain}
\newtheorem{thm}{Theorem}
\newtheorem{lem}{Lemma}
\newtheorem{prop}{Proposition}
\newtheorem{mydef}{Definition}
\newtheorem{bsp}{Example}
\newtheorem{rem}{Remark}
\newcommand{\ls}[1]{{\scriptsize{#1}}}
\newcommand{\lss}[1]{{\tiny{#1}}}
\newcommand{\kreis}{\tikz{\node[shape=circle,draw,inner sep=1.5pt] {};}}
\newcommand{\vertex}[1]{\tikz[baseline=(char.base)]{
            \node[shape=circle,draw,inner sep=0pt] (char) {$v_{#1}$};}}
\newcommand{\stage}[2]{\tikz[baseline=(char.base)]{
            \node[shape=circle,draw,inner sep=0pt,fill={#1}] (char) {$v_{#2}$};}}
\newcommand{\csep}{1}
\newcommand{\rsep}{1}
\DeclareMathOperator{\ind}{\mathbbm{1}}
\DeclareMathOperator{\pa}{pa}
\DeclareMathOperator{\ch}{ch}
\newcommand{\N}{\mathbb{N}}
\newcommand{\R}{\mathbb{R}}
\newcommand{\F}{{\mathcal{F}}}
\newcommand{\T}{{\mathcal{T}}}
\newcommand{\St}{{\mathcal{S}}}
\newcommand{\ttheta}{\boldsymbol{\theta}}
\newcommand{\pt}{(\T,\ttheta_{\T})}
\newcommand{\ptrep}{[\T,\ttheta_{\T}]}
\newcommand{\st}{(\St,\ttheta_{\St})}
\newcommand{\ft}{(\F,\ttheta_{\F})}
\newcommand{\treemodel}{\mathbb{P}_{\pt}}
\newcommand{\tree}{\T=(V,E)}
\newcommand{\ppitheta}{\boldsymbol{\pi}_{\ttheta,\T}}
\newcommand{\pitheta}{\pi_{\ttheta,\T}}
\newcommand{\Ptheta}{P_{\ttheta}}
\newcommand{\prodm}[1]{\prod_{\mathclap{#1}}}
\newcommand{\summ}[1]{\sum_{\mathclap{#1}}}
\newcommand{\fact}{s(c(\ttheta))}
\newcommand{\iotat}{\iota_{\T}}
\newcommand{\emap}{\mathfrak{c}}
\newcommand{\smap}{\mathfrak{s}}
\newcommand{\fmap}{\mathfrak{r}}
\begin{document}

\begin{frontmatter}
\title{Equivalence Classes of Staged Trees}
\runtitle{Equivalence Classes of Staged Trees}
\begin{aug}
\author{\fnms{Christiane} \snm{G\"orgen}\ead[label=e1]{c.gorgen@warwick.ac.uk}}\and
\author{\fnms{Jim Q.} \snm{Smith}\ead[label=e2]{j.q.smith@warwick.ac.uk}}
\runauthor{Ch.~G\"orgen and J.~Q.~Smith}
\affiliation{University of Warwick}

\address{Department of Statistics\\
University of Warwick\\
Coventry CV4 7AL\\
United Kingdom\\
\printead{e1,e2}}
\end{aug}

\def\abstractname{ }
\begin{abstract}
In this paper we give a complete characterization of the statistical equivalence classes of CEGs and of staged trees. We are able to show that all graphical representations of the same model share a common polynomial description. Then, simple transformations on that polynomial enable us to traverse the corresponding class of graphs. We illustrate our results with a real analysis of the implicit dependence relationships within a previously studied dataset.
\end{abstract}

\begin{keyword}[class=MSC]
\kwd[Primary ]{60E05}
\kwd{60K35}
\kwd[; secondary ]{62E99}
\end{keyword}

\begin{keyword}
\kwd{Algebraic Statistics}
\kwd{Chain Event Graphs}
\kwd{Probability Trees}
\kwd{Staged Trees}
\end{keyword}
\end{frontmatter}

\section{Introduction}

The \emph{Chain Event Graph (CEG)} is a discrete statistical model based on a graphical description given by an event tree~\cite{smithanderson:ceg}. CEGs have now successfully led statistical inference in a whole range of domains~\cite{lorna:bnCEG, cowelljim:causal,freeman:MAP,thwaitessmithcowell:propagation}. However, a formal analysis of the statistical properties of this class of models is long overdue.

In this paper, it will be most convenient to represent a CEG model by a corresponding \emph{staged tree}~\cite{smithanderson:ceg}. 
From this colored graph we can read a  parametrization rule given by the multiplication of transition probabilities along root-to-leaf paths. Two staged trees are said to be \emph{statistically equivalent} if their  parametrization rules parametrize the same model: see \cref{sect:trees}.

The study of these statistical equivalence classes is an important one. The first reason for this is computational: CEGs constitute a massive model space to explore. By identifying a single representative within an equivalence class and a priori selecting across these representatives rather than the full class, we can dramatically reduce the search effort across this space. The second reason concerns coherence: when adopting a Bayesian approach in model selection, \cite{heckerman}~and others have argued that two statistically equivalent models (i.e.\,those always giving the same likelihood) should be given the same prior distribution over its parameters. To apply this principle, it is essential to know when two CEGs make the same distributional assertions. The third reason is inferential: just like a Bayesian network (BN), a CEG or staged tree has a natural causal extension~\cite{cowelljim:causal,thwaites:causal}. So, in particular, causal discovery algorithms can be applied to CEGs to elicit a putative causal ordering between various associated variables. A strong argument is that a necessary condition for a causal deduction to be made from a given dataset is that this deduction is invariant to the choice of one representative within a statistical equivalence class. So again we need to be able to identify equivalence classes of a hypothesized causal CEG in order to perform these algorithms.

Now, unlike for BNs, where model representations making equivalent distributional assumptions can be elegantly characterized through their sharing the same \emph{essential graph} \cite{amp:equiv,heckerman}, sadly no such common representation is available for staged trees or CEGs.
However, we show here that we can instead specify staged trees in terms of a nested polynomial representation. This then provides a natural algebraic index for a class of equivalent staged trees and an analogue of the essential graph.
Because staged tree models include discrete BN models as a special case, our polynomial characterization also gives an alternative to the ansatz adopted by~\cite{geigermeeksturm:algebra}.

Our central theorem, presented in \cref{sect:polynomials}, is based on two main findings. First, the \emph{interpolating polynomial} of a staged tree can capture certain context-specific independence structures that are invariant with respect to a class of graphical transformations we call \emph{swaps}. These transformations are analogous to arc reversals sometimes applied to BN models~\cite{schachter}. Second, by substituting various monomial terms of the interpolating polynomial into single factors we can often simplify our representation to capture only its substantive structure. Within our development this corresponds to what we call here a \emph{resize} operator on the staged tree. We show later that in the context of decomposable BNs, this operation is analogous for example to the transformation of a directed acyclic graph into a junction tree~\cite{lauritzen}. Swaps and resizes enable us to meaningfully incrementally traverse the class of statistically equivalent staged tree representations of a given model. We are able to show that between every two statistically equivalent staged trees there is a map which is a composition of these operators. Statistical equivalence classes of staged tree and CEG models are thus fully characterized through simple relationships between their interpolating polynomials.

We illustrate our methods by giving a full characterization of the statistical equivalence class and a putative causal interpretation of the staged tree representing the Christchurch Health and Development Study~\cite{chds} in \cref{sect:causal}. We end the paper with a brief discussion.

\section{Staged Tree Statistical Models}\label{sect:trees}

In this paper we study properties of parametric statistical models which are based on a graphical representation given by a probability tree \cite{shafer:causalconj,smithanderson:ceg}. We will treat the probability tree not only as some easily interpretable picture but as a directed graphical model in its own right. To properly study equivalence classes of these models, we first need to tighten the formalism introduced in~\cite{smithanderson:ceg}.
\medskip

A finite graph $\tree$ with vertex set $V$ and edge set $E\subseteq V\times V$ is called a \emph{tree} if it is connected and has no cycles \cite{shafer:causalconj}.
In a \emph{directed} tree, each edge $e=(v,v')\in E$ is a pair of ordered vertices. We call vertices $\pa(v)=\{v'~|~\text{there is } (v',v)\in E\}$ the \emph{parents} of $v\in V$ and $\ch(v)=\{v'\in V~|~\text{there is } (v,v')\in E\}$ the set of \emph{children} of a vertex $v\in V$. A vertex $v_0\in V$ without parents is called a \emph{root} of the tree and vertices without children are called \emph{leaves}. We use the term \emph{root-to-leaf path} and the sybol $\lambda$ for a directed and connected sequence of edges $E(\lambda)\subseteq E$ which emanate from a root and terminate in a leaf. We call a directed tree an \emph{event tree} if all vertices except for one unique root have exactly one parent and each parent which is not a leaf has at least two children.

In a tree model (as defined below), every root-to-leaf path represents an atom in a given sample space and depicts one possible history of a unit in a population passing through the tree. Every vertex $v\in V$ denotes a situation that such a unit might find itself in during that progress, and every edge $e=(v,v')\in E$ denotes the possibility of passing from one situation $v$ to the next $v'$. For any unit in the population there are always at least two possible unfoldings from every situation it might pass through.

We denote the set of all root-to-leaf paths of an event tree by $\Lambda(\T)$. For fixed $v\in V$ we define a \emph{vertex-centered event} as $\Lambda(v)=\bigl\{\lambda\in\Lambda(\T)~|~\text{there is } (\cdot,v)\in E(\lambda)\bigr\}$ and set $\Lambda(v_0)=\Lambda(\T)$. In tree models, the set of all root-to-leaf paths going through one fixed vertex is the set of all atoms for which that situation happens.
We call a vertex $v\in V$ together with its emanating edges $E(v)=\{(v,v')\in E~|~v'\in\ch(v)\}$  a \emph{floret}, denoted $\F_v=(v,E(v))$. If $v$ is a leaf then $E(v)=\emptyset$ and $\F_v$ is an empty floret. If the entire event tree is a single floret, it is called a \emph{star}.

The directionality of an event tree induces a natural order on events (and florets) as follows: We say that $\Lambda(v)$ is \emph{upstream} of $\Lambda(v')$ if and only if every root-to-leaf path $\lambda\in\Lambda(v)\cap\Lambda(v')$ is a sequence of edges containing $(v,\cdot)$ before $(v',\cdot)$. Tree models are therefore particularly useful if a model class needs to express a potential ordering of events rather than of random variables~\cite{smithanderson:ceg}.

\begin{mydef}[Probability tree]\label{def:probtree}
Let $\tree$ be an event tree with parameters $\theta(e)=\theta(v,v')$ associated to all edges $e=(v,v')\in E$. We call $\ttheta_v=\bigl(\theta(e)~|~e\in E(v)\bigr)$ a \emph{vector of floret parameters}.

The pair $\pt$ of tree graph and labels $\ttheta_{\T}=(\ttheta_v~|~v\in V)$ is called a \emph{probability tree} if every floret parameter vector lies inside a probability simplex, so $\sum_{e\in E(v)}\theta(e)=1$ and $\theta(e)\in(0,1)$ for all $v\in V$, $e\in E$. In probability trees, we call each parameter $\theta(e)$, $e\in E$, a \emph{primitive probability}.
\end{mydef}

Primitive probabilities can be thought of as (conditional) transition probabilities along root-to-leaf paths. Throughout, we assume these probabilities to be strictly positive in order to avoid various distracting technical issues concerning boundary cases.

We henceforth denote the product of all primitive probabilities along a root-to-leaf path $\lambda\in\Lambda(\T)$ in a probability tree by
\begin{equation}\label{eq:primprob}
\pitheta(\lambda)=\prodm{e\in E(\lambda)}\theta(e)
\end{equation}
where $\ttheta=\ttheta_{\T}$ for short. It is straightforward to show that $\pitheta$ is a strictly positive probability mass function. In particular, atomic probabilities sum to unity because of the floret sum-to-$1$ conditions in \cref{def:probtree}.

Let $\ppitheta=\bigl(\pitheta(\lambda)~|~\lambda\in\Lambda(\T)\bigr)$ denote a vector of atomic probabilities represented by a probability tree. Following standard notation in algebraic statistics, we always think of a family of discrete probability distributions as a set of points. So let
\begin{equation}\label{eq:treemodel}
\treemodel=\Bigl\{\ppitheta~|~\ttheta\in\bigtimes_{\mathclap{v\in V}}\Delta_{\#E(v)-1}^\circ\Bigr\}\subseteq\Delta_{\#\Lambda(\T)-1}^\circ
\end{equation}
where $\Delta_{n-1}^\circ=\{\boldsymbol{p}\in\R^n~|~{\sum_{i=1}^np_i=1} \text{ and }{p_i\in(0,1)} \text{ for all }i\in[n]\}$ denotes a probability simplex, $[n]=\{1,2,\ldots,n\}$~\cite{oberwolfach}. We call the  parametric statistical model in \cref{eq:treemodel} a \emph{(probability) tree model} and say that the elements in $\treemodel$ \emph{factorize according to} $\T$. This terminology is analogous to BN models where distributions factorize according to an acyclic digraph~\cite{lauritzen}.

Henceforth, we will call two probability tree representations $\pt$ and $\st$ of the same model $\treemodel=\mathbb{P}_{\st}$ \emph{statistically equivalent}. We let the symbol $\ptrep$ denote the set of all probability tree representations of $\treemodel$.\medskip

We can always identify the set of root-to-leaf paths of a probability tree with a finite space $\Omega$ via a bijection 
\begin{equation}\label{eq:iota}
\iotat:~\Omega\to\Lambda(\T),\quad \omega\mapsto\bigl(e~|~e\in E(\iotat(\omega))\bigr)
\end{equation} 
which maps an \emph{atom} or atomic event to a sequence of edges. Importantly, $\pitheta$ then induces a measure ${\Ptheta=\pi_{\ttheta,\T}\circ\iotat}$ on $\Omega$ which does not depend on the graph $\T$. We will usually use the symbol $\Ptheta(\omega)$ to refer to a value in $(0,1)$ and $\pitheta(\iotat(\omega))$ to refer to a symbolic product of parameters, $\omega\in\Omega$. To make this distinction, we also call $\pitheta$ an \emph{atomic monomial} rather than an atomic probability. So two statistically equivalent staged trees need to have the same underlying space $\Omega$ and the same distribution $\Ptheta$ over its atoms. Using  \cref{eq:iota}, root-to-leaf paths with the same meaning (representing the same atom) can then be identified across different representations.

A tree model does not need to arise from an underlying set of problem variables. However, if it is naturally defined through the relationships between a set of pre-specified random variables then we can identify the state space of these variables with a set of root-to-leaf paths as in \cref{eq:iota}. This enables us for instance to represent a discrete BN by a probability tree.

\begin{bsp}\label{bsp:saturated}
Let $\pt$ be a probability tree with $n\in\N$ root-to-leaf paths and a probability mass function $\pitheta$ as above. The vector $\ppitheta\in\Delta_{n-1}^\circ$ can then take any value within the probability simplex and we call $\treemodel=\Delta_{n-1}^\circ$ a \emph{saturated} tree model.

Let $\F=(\{v_0\}\cup\ch(v_0),\{e_1,\ldots,e_n\})$ be a star with attached parameter vector $\ttheta_{\F}=\bigl(\theta(e_i)~|~i\in[n]\bigr)\in\Delta_{n-1}^\circ$. Then $\ft$ is statistically equivalent to $\pt$ if the probabilities associated with the same atoms are identified for any choice of parameters: so $\theta(e_i)=\pitheta(\iota_{\T}(\omega_i))$ for all $\iota_{\F}^{-1}(e_i)=\omega_i\in\Omega$ and every $i\in[n]$.
\end{bsp}

Probability trees are most interesting when two or more vectors of floret parameters take the same values, and the distributions $\pitheta$ factorize according to a \enquote{colored} graph~$\T$.

\begin{mydef}[Staged tree]\label{def:stages}
Let $\pt$ with $\tree$ be a probability tree. We define an equivalence relation which relates two vertices $v,w\in V$ if and only if their parameter vectors coincide $\ttheta_v=\ttheta_w$ up to a permutation of their components. Then $v$ and $w$ are said to be in the same \emph{stage} and $\pt$ is said to be a \emph{staged tree}. 

If no related vertices $v,w\in V$ are connected by a root-to-leaf path, $\Lambda(v)\cap\Lambda(w)=\emptyset$, we will call $\pt$ \emph{square-free}.
\end{mydef}

Whenever two vertices are in the same stage and a unit arrives at one of them, the transition probabilities to all children of that vertex will not depend on which of the two vertices the unit is actually in, and will thus not depend on the path that unit took to arrive in that situation. The transition probabilities from these stages are thus independent of upstream events. We always assign the same \emph{color} to all vertices in the same stage. In this way, all modelling assumptions in staged tree models are coded purely graphically and are very easy to communicate~\cite{lorna:bnCEG,ours}.

When having a preassigned set of random variables, setting floret parameter vectors equal to each other can be interpreted as specifying a set of \emph{context-specific} conditional independences as in \cite{boutilier}. Models with these types of constraints are now widely used in BN modelling, especially when the domain of application is large. 


\begin{bsp}\label{bsp:lorna1}
The staged tree $\pt$ depicted in \cref{fig:lorna1} is a simplified detail of the graph analyzed in~\cite{lorna:bnCEG}. Here, every atom is represented by a root-to-leaf path with two edges: the first depicts the socio-economic background of a child, the second corresponds to a number of life events. See also \cref{sect:causal} below.

Information of the type \enquote{if we know the social status of a child's family, then their number of life events does not depend on their economic situation} can be embedded graphically by collecting the vertices $v_1, v_2\in u_{\text{blue}}$ and $v_3, v_4\in u_{\text{green}}$ in the blue- and green-colored stage, respectively. The primitive probabilities on the edges of the corresponding florets $\ttheta_{v_1}=\ttheta_{v_2}$ and $\ttheta_{v_3}=\ttheta_{v_4}$ are then identified.
\end{bsp}

\begin{figure}[t]
\centering
\begin{tikzpicture}
\renewcommand{\csep}{2}
\renewcommand{\rsep}{0.34}
\node (v0) at (0*\csep,4*\rsep) {\vertex{0}};
\node (v1) at (1*\csep,6*\rsep) {\stage{SkyBlue}{1}};
\node (v2) at (1*\csep,4.5*\rsep) {\stage{SkyBlue}{2}};
\node (v3) at (1*\csep,2.5*\rsep) {\stage{Green!70}{3}};
\node (v4) at (1*\csep,1*\rsep) {\stage{Green!70}{4}};
\node (v5) at (2*\csep,7*\rsep) {\kreis};
\node (v6) at (2*\csep,6*\rsep) {\kreis};
\node (v7) at (2*\csep,5*\rsep) {\kreis};
\node (v8) at (2*\csep,4*\rsep) {\kreis};
\node (v9) at (2*\csep,3*\rsep) {\kreis};
\node (v10) at (2*\csep,2*\rsep) {\kreis};
\node (v11) at (2*\csep,1*\rsep) {\kreis};
\node (v12) at (2*\csep,0*\rsep) {\kreis};

\draw [->] 	(v0) -- node [midway,fill=white,sloped,xshift=3] {$+-$} (v2);
\draw [->] 	(v0) -- node [above, sloped] {$++$} (v1);
\draw [->] 	(v0) -- node [midway,fill=white,sloped,xshift=3] {$-+$} (v3);
\draw [->] 	(v0) -- node [below, sloped] {$--$} (v4);

\draw [->] 	(v1) -- node [xshift=8,fill=white] {$-$} (v6);
\draw [->] 	(v1) -- node [above, sloped] {$+$} (v5);
\draw [->] 	(v2) -- node [xshift=8,fill=white,sloped] {$+$} (v7);
\draw [->] 	(v2) -- node [below,sloped,yshift=2] {$-$} (v8);
\draw [->] 	(v3) -- node [above,sloped,yshift=-2] {$+$} (v9);
\draw [->] 	(v3) -- node [xshift=8,fill=white,sloped] {$-$} (v10);
\draw [->] 	(v4) -- node [xshift=8,fill=white] {$+$} (v11);
\draw [->] 	(v4) -- node [below, sloped] {$-$} (v12);

\node at (0.5*\csep,7.5*\rsep) [text width=3cm] {socio-economic\\ background};
\node at (1.5*\csep,8*\rsep) {life events};
\end{tikzpicture}
\caption{A staged tree $\pt$, simplified version taken from \cite{lorna:bnCEG}. We label the edges by $+$ and $-$, corresponding to \enquote{high} and \enquote{low}, respectively. See \cref{bsp:lorna1} for a discussion.}\label{fig:lorna1}
\end{figure}
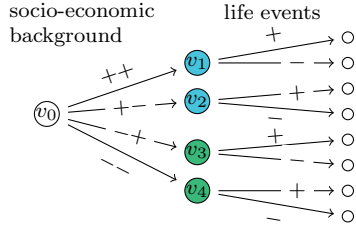

\section{A Polynomial Characterization of Staged Tree Models}\label{sect:polynomials}
In the development below we will interpret primitive probabilities or parameters which determine statistical models not as place holders for as yet undetermined numerical values but as elements of some formal symbolic (or algebraic) structure. This has been a hugely successful approach in the field of algebraic statistics~\citep{oberwolfach,pistonerw:algstat} which has until now not been explored for staged trees.
We have found that the stage constraints on the parameter space of a staged tree can be easily translated into polynomial constraints on the atomic probabilities, and that the underlying model can thus be characterized as the solution set of a set of polynomial equations. This result is analogous to a well-known algebraic characterisation of BN models~\cite{geigermeeksturm:algebra}. The process of translating the model-defining equations in \cref{eq:primprob} into equations which do not depend on a fixed parametrisation is straightforward but technical: see \cite{mythesis}.

So in this paper we will instead use the idea of embedding model assumptions in an algebraic framework to develop an alternative and rather different approach which is more intuitive for staged trees. In particular, we define a polynomial below which can be used to both represent a staged tree model and to recover \emph{constructively} all possible graph representations: a process not possible using only a set of defining equations.

\subsection{Polynomial Equivalence and the Swap Operator}\label{sub:polyequivalence}
We first characterize a subclass of a class of statistically equivalent staged trees for which the following polynomial is invariant:

\begin{mydef}[Interpolating polynomial]\label{def:interpol}
Let $\mathbb{P}=\{\Ptheta~|~\ttheta\in\Theta\}$ denote a parametric statistical model on an underlying discrete space $\Omega$ with the property that every atomic probability $\Ptheta(\omega)$ is a monomial in the parameters $\ttheta$, $\omega\in\Omega$. A \emph{network polynomial} is of the form
\begin{equation}\label{eq:charpol}
c_{g,\mathbb{P}}(\ttheta)=\sum_{\mathclap{\omega\in\Omega}}g(\omega)\Ptheta(\omega)
\end{equation}
where $g$ is a function that determines a real coefficient for each monomial. If $g=1$, then the symbolic sum of all atomic monomials is called an \emph{interpolating polynomial} of the model $\mathbb{P}$. 
\end{mydef}

Network polynomials where $g=\ind_A$ is chosen to be an indicator of an event $A\subseteq\Omega$ have been successfully used to answer probabilistic queries in BN models~\citep{darwiche:networkpol}. Different choices of $g$ also relate the network polynomial to moment generating functions~\citep{pistone:interpolating}. We have already demonstrated the efficacy of using these polynomials to calculate marginal and conditional probabilities in staged tree models~\cite{manuich}, for sensitivity analysis in models with a multilinear parametrization~\cite{manuich2} and for causal manipulations~\cite{PGM}.

In the following, we will write $c_{\T}(\ttheta)=\sum_{\lambda\in\Lambda(\T)}\pitheta(\lambda)$ for the interpolating polynomial $c_{1,\treemodel}$ of a tree model represented by $\pt$.

\begin{rem}\label{rem:interpol}
When evaluating the network polynomial of a staged tree for a certain choice of parameters, we find that in a non-symbolic framework the polynomial
\begin{equation}\label{eq:prob}
c_{\ind_A,\T}(\ttheta) =\summ{\lambda\in\Lambda(\T)}\ind_A(\lambda)\pitheta(\lambda) 
=\summ{\lambda\in A}\pitheta(\lambda)=\Ptheta(\iotat^{-1}(A))
\end{equation}
is a function $(A,\ttheta)\mapsto \Ptheta(\iotat^{-1}(A))$ which maps an event $A\subseteq\Lambda(\T)$ and a choice of parameters to the probability of that event.
\end{rem}

In a symbolic setting, we usually ignore sum-to-$1$ conditions and exploit only the formal structure of a polynomial. This is for instance beneficial when obtaining results like \cref{eq:prob} from differentiation operations~\cite{manuich}. Note that because any choice of floret sum-to-$1$ conditions on an event tree will yield a probability distribution over the depicted root-to-leaf paths, these conditions can be ignored in the characterisation of equivalence classes of staged trees below and be imposed only after having found a model representation.

\begin{mydef}[Polynomial equivalence]\label{def:polyequiv}
Let $\pt$ and $\st$ be two staged trees with the same underlying space $\Omega$. These staged trees are called \emph{polynomially equivalent} if and only if they have the same edge labels and their network polynomials coincide formally $c_{g,\St}=c_{g,\T}$ for every choice of the function $g$. 
\end{mydef}

In general, polynomial equivalence is not necessary for statistical equivalence: see for instance \cref{bsp:saturated} where two very different parametrizations can be used for the same model. However, we do have the following result:

\begin{lem}\label{lem:polyimpliesstat}
Polynomial equivalence implies statistical equivalence.
\end{lem}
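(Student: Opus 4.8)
The plan is to derive the equality of models $\treemodel=\mathbb{P}_{\st}$ --- two sets of distributions on the common space $\Omega$ --- from the much stronger formal identity of their network polynomials, by exhibiting the two staged trees as carrying the \emph{same} parametrisation map on the \emph{same} parameter space.

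The first and main step specialises the hypothesis. Fix an atom $\omega\in\Omega$ and take $g=\ind_{\{\omega\}}$ in \cref{def:polyequiv}. By \cref{eq:charpol}, together with $\Ptheta(\omega)=\pitheta(\iotat(\omega))$, one has $c_{\ind_{\{\omega\}},\T}(\ttheta)=\pitheta(\iotat(\omega))$ and, identically, $c_{\ind_{\{\omega\}},\St}(\ttheta)=\pi_{\ttheta,\St}(\iota_{\St}(\omega))$. Since $\pt$ and $\st$ have the same edge labels, their primitive probabilities are indeterminates of one common polynomial ring, so the hypothesised identity $c_{\ind_{\{\omega\}},\T}=c_{\ind_{\{\omega\}},\St}$ reads literally $\pitheta(\iotat(\omega))=\pi_{\ttheta,\St}(\iota_{\St}(\omega))$ in that ring. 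Letting $\omega$ range over $\Omega$, the tuples of atomic monomials of the two trees coincide, i.e.\ $\ttheta\mapsto\ppitheta$ and $\ttheta\mapsto\boldsymbol{\pi}_{\ttheta,\St}$ are the same tuple of polynomials. (Equivalently, by \cref{rem:interpol} and \cref{eq:prob}, for every event $A\subseteq\Omega$ and every value of $\ttheta$ the probability $\Ptheta(A)$ read off from $\T$ agrees with the one read off from $\St$.)

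It then remains to see that this common parametrisation is being evaluated over the same domain in both cases, i.e.\ that the product of open floret simplices $\bigtimes_{v\in V}\Delta_{\#E(v)-1}^\circ$ attached to $\T$ in \cref{eq:treemodel} coincides with the analogous product attached to $\St$. Here one invokes \enquote{same edge labels} once more: it matches up the florets of the two trees, hence their sum-to-$1$ conditions and their stage identifications, so the two parameter spaces are literally the same set of tuples $\ttheta$. (Alternatively, one can recover the floret partition intrinsically from the way the indeterminates co-occur in the atomic monomials, using that a root-to-leaf path meets each floret at most once.) With identical coordinate maps and identical domains, the images $\treemodel$ and $\mathbb{P}_{\st}$ are equal, which is exactly statistical equivalence.

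The step I expect to be delicate is the last one, the equality of the two parameter domains. The formal polynomial identity records, for each atom, only the unordered collection of primitive probabilities multiplied along the corresponding path, whereas the parameter space of a staged tree is cut out by its floret sum-to-$1$ conditions, which refer to the depth-structure of the tree. One therefore has to check that \enquote{same edge labels} together with \enquote{same atomic monomials} genuinely pins down which primitive probabilities form a floret; this bookkeeping is routine for square-free trees but needs a little extra care when a primitive probability can occur with multiplicity greater than one in a single atomic monomial.
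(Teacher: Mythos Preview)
Your opening step---specialising $g$ to the indicator $\ind_{\{\omega\}}$ of a single atom and reading off the formal equality $\pitheta(\iotat(\omega))=\pi_{\ttheta,\St}(\iota_{\St}(\omega))$---is exactly the paper's proof, and the paper stops right there.

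Where you diverge is in your second step, and there you make a genuine error. You claim that \enquote{same edge labels} forces the two trees to have the same floret partition, hence the same sum-to-$1$ constraints and literally the same parameter domain. This is false, and the paper's own swap construction (\cref{fig:polyequiv}) is a counterexample: the trees $\pt$ and $\st$ there share the edge labels $\theta_1,\ldots,\theta_8$ and the same atomic monomials, yet the root floret of $\T$ is $\{\theta_1,\theta_2,\theta_3\}$ while that of $\St$ is $\{\theta_1,\theta_4,\theta_5\}$. The sum-to-$1$ conditions differ, the parameter domains are distinct subsets of $(0,1)^8$, and a tuple $\ttheta$ valid for $\T$ will generally \emph{not} be valid for $\St$. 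Your alternative suggestion---recovering the floret partition intrinsically from co-occurrence of indeterminates in atomic monomials---fails for the same reason, and your claim that this bookkeeping is \enquote{routine for square-free trees} is directly contradicted by this square-free example.

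Your instinct that something needs to be said about the parameter domains is reasonable; the paper's two-line proof does gloss over it. But the correct completion is not that the domains coincide---it is a renormalisation argument: given any $p\in\treemodel$, one defines primitive probabilities for $\St$ by $\theta'(w,w')=\Ptheta(\Lambda_{\St}(w'))/\Ptheta(\Lambda_{\St}(w))$, and then checks that the stage identifications of $\St$ (which are encoded in the shared edge labels) are respected. The paper alludes to exactly this renormalisation in the discussion following \cref{fig:polyequiv}, noting that \enquote{the numerical value of say $\theta_1$ is different in $\pt$ and $\st$} but corresponds to the same event after renormalising.
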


\begin{proof}
Set $g_{\omega}=\ind_{\{\omega\}}$ to be the indicator function of an arbitrary atomic event $\omega\in\Omega$. Then polynomial equivalence of two staged trees $\pt$ and $\st$ implies termwise equality of the probability mass functions $c_{g_{\omega},\T}(\ttheta)=\Ptheta(\omega)=c_{g_{\omega},\St}(\ttheta)$ by \cref{rem:interpol}.
\end{proof}

So all polynomially equivalent staged trees can be characterized as having the same interpolating polynomial plus an identification between their atoms. In square-free staged trees, the probability mass function $\pitheta:\lambda\mapsto\prod_{e\in E(\lambda)}\theta(e)$ is formally injective: that is, the atomic monomials are pairwise different and we can uniquely identify atoms with monomials. So in these trees, the interpolating polynomial is sufficient to identify a class of polynomially equivalent staged trees.
We henceforth use the symbol $\ptrep^c\subseteq\ptrep$ to denote a class of polynomially equivalent square-free staged trees which share the same interpolating polynomial $c$. Note that a staged tree $\pt$ is square-free if and only if $c_{\T}$ is linear in every indeterminate. We will restrict all further analysis to this class of models.

\begin{rem}\label{rem:nesting}
The graph of a staged tree $\pt$ yields a way to parenthesize the associated interpolating polynomial as follows.
For every floret $\F_v$ where $v\in V$ is the parent of a leaf, we sum all components of its parameter vector $\ttheta_v$ and multiply the result by its parent label $\theta(\pa(v),v)$. We then sum the result over the parent's labels $\ttheta_{\pa(v)}$. By repeating this until all floret parameter vectors are summed and $\pa(v)=v_0$, the interpolating polynomial can then be written in terms of a nested factorization
\begin{equation}\label{eq:interpolfactored}
c_\T(\ttheta)~=~\sum_{\mathclap{v_1\in\ch(v_0)}}\theta(v_0,v_1) \Bigl(\sum_{\mathclap{v_2\in\ch(v_1)}}\theta(v_1,v_2)~\ldots~ \Bigl(\sum_{\mathclap{v_k\in\ch(v_{k-1})}}\theta(v_{k-1},v_k) \Bigr)\Bigr)
\end{equation}
where the final index $k\in\N$ of every inner sum implicitly depends on the length of a root-to-leaf path $((v_0,v_1),(v_1,v_2),\ldots,(v_{k-1},v_k))$. See \cref{fig:polyequiv} for an illustration.
\end{rem}

The interpolating polynomials of discrete BN models admit a nested bracketing as in \cref{eq:interpolfactored}. The parameters in those polynomials are then \emph{potentials} of a probability mass function  and are normalized via the florets of an underlying tree representation. This type of representation of a polynomial provides a very efficient way to compute joint probabilities from marginals in a BN model~\cite{lauritzenspiegelhalter} and comes for free when choosing a staged tree representation rather than an acyclic digraph.

Centrally, we can generalize the observation above to a new concept:

\begin{mydef}[Tree compatibility]\label{def:treecompatible}
Let $\ttheta=(\theta_1,\ldots,\theta_d)$ be a parameter vector, $d\in\N$. We call any polynomial \emph{tree compatible} if it admits a representation of the form
\begin{equation}\label{eq:treecompatible}
c(\ttheta)~=~\summ{\theta_1\in A_1~}\theta_1\Bigl(\sum_{\mathclap{~\theta_2\in A_2(\theta_1)~}}\theta_2\Bigl(\sum_{\mathclap{\quad~~\theta_3\in A_3(\theta_2)}}\theta_3~\ldots~\Bigl(\sum_{\mathclap{~\theta_k\in A_k(\theta_{k-1})}}\theta_{k}\Bigr)\Bigr)\Bigr)
\end{equation}
where every $A_1,A_j(\theta_{j-1})\subseteq\{\theta_1,\ldots,\theta_d\}$ has at least two elements, for $j\in[k]$ and $k\in\N$. We write $\fact$ for one fixed order of summation of the terms in $c(\ttheta)$ as above, and call this a \emph{tree-compatible factorization}.
\end{mydef}

An important aspect of the result in \cref{rem:nesting} is that it is reversible: not only can we easily read a polynomial from an event tree but we can also construct a tree graph from a tree-compatible factorization. In addition, all polynomially equivalent staged trees arise from a tree-compatible reordering of a given summation. Each of these gives a different representation within the same statistical equivalence class.

\begin{prop}\label{prop:polyequiv}
Let $\mathbb{P}$ be a discrete parametric model whose atomic probabilities are of monomial form and let $c=c_{1,\mathbb{P}}$ denote its interpolating polynomial. Then there exists a probability tree representation $\pt$ with $\treemodel=\mathbb{P}$ if and only if $c$ is tree compatible. 
The map $\emap:\fact\mapsto\pt$ is invertible.
\end{prop}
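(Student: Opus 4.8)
The plan is to prove both implications essentially by the reversibility already foreshadowed in \cref{rem:nesting}, and then to verify that the construction $\emap$ and the reading-off map are mutually inverse. First I would prove the forward direction: given a probability tree $\pt$ with $\treemodel=\mathbb{P}$, \cref{rem:nesting} shows directly how to parenthesize $c_\T(\ttheta)=\sum_{\lambda\in\Lambda(\T)}\pitheta(\lambda)$ into the nested form of \cref{eq:interpolfactored}. The only thing to check is that this nested form satisfies the requirements of \cref{def:treecompatible}: the outermost sum ranges over the edge labels of the root floret $\F_{v_0}$, and each inner sum $\sum_{v_{j}\in\ch(v_{j-1})}\theta(v_{j-1},v_j)$ ranges over the labels of the floret $\F_{v_{j-1}}$, which by the event-tree axiom has at least two children; hence each $A_1$ and each $A_j(\theta_{j-1})$ has at least two elements. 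Since every atomic probability is the monomial $\pitheta(\lambda)$ and every indeterminate appears (as a primitive probability), $c$ equals $c_{1,\mathbb{P}}$ and is tree compatible.

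For the converse, suppose $c=c_{1,\mathbb{P}}$ is tree compatible with a fixed tree-compatible factorization $\fact$ as in \cref{eq:treecompatible}. I would build the tree $\T$ recursively: create a root $v_0$, and for each summand $\theta_1\in A_1$ create a child $v_1$ of $v_0$ with edge label $\theta(v_0,v_1)=\theta_1$; then for that $v_1$, for each $\theta_2\in A_2(\theta_1)$ create a child with edge label $\theta_2$; and so on, terminating a branch when the nesting terminates. The condition $\#A_1\ge 2$ and $\#A_j(\theta_{j-1})\ge 2$ guarantees every non-leaf vertex has at least two children, so $\T$ is a genuine event tree; attaching the floret parameter vectors $\ttheta_v$ read off from the labels makes $\pt$ a probability tree (imposing floret sum-to-$1$ conditions afterwards, as noted in the discussion following \cref{rem:interpol}, is harmless). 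By construction each root-to-leaf path $\lambda$ of this tree corresponds to one fully-expanded monomial of \cref{eq:treecompatible}, so expanding the nested product gives $c_\T(\ttheta)=c(\ttheta)=c_{1,\mathbb{P}}$, and because atomic probabilities in both $\mathbb{P}$ and $\treemodel$ are monomials uniquely matched to these terms, $\treemodel=\mathbb{P}$.

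Finally, invertibility of $\emap:\fact\mapsto\pt$: the forward direction gives a map $\pt\mapsto\fact$ sending a probability tree to the tree-compatible factorization obtained by the bracketing of \cref{rem:nesting}, and one checks that the two constructions are mutually inverse on the nose — reading off labels from $\emap(\fact)$ along root-to-leaf paths reproduces exactly the nesting $\fact$, and conversely the recursive construction applied to the bracketing of $\pt$ reproduces the vertex set, edge set and labels of $\T$ up to the obvious identification of vertices with their addresses (the sequences of labels on the path leading to them).

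I expect the main obstacle to be purely bookkeeping rather than conceptual: one must be careful that a \emph{tree-compatible factorization} is genuinely a rooted, ordered nesting (so that distinct branches that happen to use the same multiset of indeterminates give distinct subtrees), and that the recursive construction does not accidentally identify two such branches. In other words, the care needed is in making precise the bijection between "positions in the nested sum" and "vertices of the tree", and in confirming that the square-free / monomial-injectivity hypothesis is exactly what is required for $c_\T$ to determine, and be determined by, the combinatorial data of the nesting. Once that correspondence is stated cleanly, both implications and the invertibility of $\emap$ follow by induction on the depth of the nesting.
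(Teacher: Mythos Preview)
Your proposal is correct and follows essentially the same approach as the paper: both directions are handled by the reversibility of the nesting in \cref{rem:nesting}, with the converse building the tree floret-by-floret from the sets $A_j(\theta_{j-1})$ and then imposing the sum-to-$1$ conditions, and invertibility of $\emap$ following because the reading-off and building-up constructions undo each other. Your only superfluous worry is the square-freeness caveat in your final paragraph: this proposition does not assume square-freeness, and neither your argument nor the paper's needs it---the bijection you want is between \emph{positions} in the nested sum and vertices, not between monomials and root-to-leaf paths, so repeated monomials cause no trouble here.
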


\begin{proof}
Sufficiency of the first part of the claim is straightforward because the interpolating polynomial of a tree model is tree compatible by \cref{eq:interpolfactored}.

For necessity assume now the interpolating polynomial of a parametric model to be tree compatible and given by the factorization $\fact$ in \cref{eq:treecompatible}.
We construct a labelled graph as follows: for every subsum of \cref{eq:treecompatible}, draw a floret $\F_j=(v_j,\{e~|~\theta(e)=\theta_j\in A_j(\theta_{j-1})\})$ with one edge for every indeterminate in the sum and attach these indeterminates as labels, $j\in[k]$. Then partially order these florets by reversing the steps in \cref{rem:nesting}, such that $\theta_j$ is the parent label of the floret whose attached parameters are $A(\theta_{j})$, for all $j\in[k]$. In this way, we construct a connected graph with no cycles---and hence a tree---whose leaf-floret edges are labelled by the innermost factors $A_k(\theta_{k-1})$ of $\fact$ and the root's edges by the outermost factors $A_1$. Since by definition every set $A_j(\theta_{j-1})$ has at least two elements, it follows that there are at least two edges in every floret. So the tree-compatible factorisations in \cref{eq:interpolfactored,eq:treecompatible} are componentwise equal and, multiplying out the brackets of $\fact$, we find that $c=c_{\T}$. Thus, we have constructed a labelled event tree $\pt$. Then imposing sum-to-$1$ conditions on the constructed florets as in \cref{def:probtree} is consistent with the sum-to-$1$ conditions on atomic probabilities in the model. So $\pt$ is a probability tree which represents $\mathbb{P}=\treemodel$.

By construction, the steps above are reversible. So the map which identifies a tree-compatible factorisation with a labelled tree is invertible.
\end{proof}

The proposition provides us with a powerful tool to determine whether a parametric model can be represented by a probability tree. This representation is a staged tree only if all constraints on the model are of the form $A_{i+1}(\theta_i)=A_{j+1}(\theta_j)$ for some $i\not=j$ in the notation of \cref{def:treecompatible}.

The result above induces two natural streams of research. First, how can we check whether or not a given interpolating polynomial is tree compatible? We will discuss this issue at the very end of this work where we will also outline ideas for an algorithmic implementation.

The second question is: how do we infer all the possible orders of bracketing of a tree-compatible interpolating polynomial $c_{\T}$? Knowing this, we can use the map $\emap$ in \cref{prop:polyequiv} and the construction outlined in the proof to obtain all tree representations in $\ptrep^c$. We will show how to do this below.
\medskip

Clearly, a transformation between two tree-compatible factorizations of an interpolating polynomial is an application of the distributive property of addition and multiplication. These correspond to the following intuitive graph transformation.

We henceforth call a subgraph of a probability tree which is an event tree with inherited edge labels a \emph{(probability) subtree}.
We call a probability subtree $\pt_u\subseteq\pt$ a \emph{twin} if it is of the following form: all root-to-leaf paths consist of exactly two edges and all children of its root are in the same stage $u$. This stage does not contain the root itself.

The interpolating polynomial of a twin $\pt_u$ can be written in the form
\begin{equation}\label{eq:swap}
c_{\T_u}(\ttheta)=\summ{e\in E(v_0)}\theta(e)\Bigl(\summ{e'\in E(v)}\theta(e')\Bigr)=\summ{e'\in E(v)}\theta(e')\Bigl(\summ{e\in E(v_0)}\theta(e)\Bigr)
\end{equation}
where $v\in u$ is one representative of the stage $u=\ch(v_0)$ and $v_0$ is the root of the twin. By \cref{prop:polyequiv}, there is a staged tree $\st_u$ which is polynomially equivalent to $\pt_u$: this is the one given by the second tree-compatible factorization in \cref{eq:swap}. Then, $\st_u$ is a subtree of a tree $\st$ which is polynomially equivalent to $\pt$ and coincides with that tree everywhere except on $\pt_u$.

\begin{mydef}[Swap]\label{def:swap}
Let $\pt$ be a staged tree and $\pt_u\subseteq\pt$ a twin around the stage $u$. Denote by $\st_u$ the staged tree which is polynomially equivalent to $\pt_u$ and let $\st_u\subseteq\st$ as above.
We will call the map ${\smap:\pt\mapsto\st}$ a \emph{na\"ive swap} and call it a \emph{swap} if $\st$ is itself a staged tree.
\end{mydef}

\Cref{fig:polyequiv} illustrates the definition above. We can see here that this operation does indeed \enquote{swap} the order of edges before and after the stage $u$. It is straightforward to show that edge-centred events on the root-edges of a twin are independent of those of edges ending in leaves. Our very plausible discovery is that for these independent events the order in which they are depicted in a tree is reversible within a statistical equivalence class, using the swap operator. 

Whilst it is natural for swaps to change the floret structure, as explained below, na\"ive swaps might also violate stage structure. The simplest case is when the root of a twin is in a stage in the original tree, and a na\"ive swap rearranges that floret but not an identified one elsewhere in the graph.
\begin{figure}[tb]
\centering
\subfloat[A staged tree $\pt$ with twin $\pt_u$ thick depicted, $u=\{v_1,v_2\}$.]{
\centering
\begin{tikzpicture}
\renewcommand{\rsep}{0.23}
\renewcommand{\csep}{1.8}

\node (v0) at (0,4*\rsep) {\vertex{0}};
\node (v1) at (\csep,4*\rsep) {\stage{SkyBlue}{1}};
\node (v2) at (\csep,2*\rsep) {\stage{SkyBlue}{2}};
\node (v3) at (2*\csep,2*\rsep) {\vertex{3}};

\node (l1) at (\csep,6*\rsep) {\kreis};
\node (l2) at (2*\csep,6*\rsep) {\kreis};
\node (l3) at (2*\csep,4*\rsep) {\kreis};
\node (l4) at (2*\csep,0) {\kreis};
\node (l5) at (3*\csep,4*\rsep) {\kreis};
\node (l6) at (3*\csep,2*\rsep) {\kreis};
\node (l7) at (3*\csep,0*\rsep) {\kreis};

\draw [->, ultra thick]	(v0) -- node [xshift=4,fill=white] {$\theta_2$} (v1);
\draw [->] 	(v0) -- node [above, sloped] {$\theta_1$} (l1);
\draw [->, ultra thick]	(v0) -- node [below, sloped] {$\theta_3$} (v2);
\draw [->, ultra thick]	(v1) -- node [xshift=4,fill=white] {$\theta_5$} (l3);
\draw [->, ultra thick]	(v1) -- node [above, sloped] {$\theta_4$} (l2);
\draw [->, ultra thick]	(v2) -- node [xshift=4,fill=white] {$\theta_4$} (v3);
\draw [->, ultra thick]	(v2) -- node [below,sloped] {$\theta_5$} (l4);
\draw [->]	(v3) -- node [above, sloped] {$\theta_6$} (l5);
\draw [->]	(v3) -- node [xshift=5,fill=white] {$\theta_7$} (l6);
\draw [->]	(v3) -- node [below, sloped] {$\theta_8$} (l7);
\end{tikzpicture}}
\qquad
\subfloat[A staged tree $\st$ with twin $\st_u$ thick depicted, $u=\{v_1,v_2\}$.]{
\centering
\begin{tikzpicture}
\renewcommand{\rsep}{0.23}
\renewcommand{\csep}{1.8}

\node (v0) at (0,4*\rsep) {\vertex{0}};
\node (v1) at (\csep,4*\rsep) {\stage{Green!70}{1}};
\node (v2) at (\csep,2*\rsep) {\stage{Green!70}{2}};
\node (v3) at (2*\csep,4*\rsep) {\vertex{3}};

\node (l1) at (\csep,6*\rsep) {\kreis};
\node (l2) at (2*\csep,6*\rsep) {\kreis};
\node (l3) at (2*\csep,2*\rsep) {\kreis};
\node (l4) at (2*\csep,0) {\kreis};
\node (l5) at (3*\csep,6*\rsep) {\kreis};
\node (l6) at (3*\csep,4*\rsep) {\kreis};
\node (l7) at (3*\csep,2*\rsep) {\kreis};

\draw [->, ultra thick]	(v0) -- node [xshift=4,yshift=-1,fill=white] {$\theta_4'$}(v1);
\draw [->] 	(v0) -- node [above, sloped] {$\theta_1'$} (l1); 
\draw [->, ultra thick]	(v0) -- node [below, sloped] {$\theta_5'$} (v2);
\draw [->, ultra thick]	(v1) -- node [xshift=4,yshift=-1,fill=white] {$\theta_3'$} (v3);
\draw [->, ultra thick]	(v1) -- node [above, sloped] {$\theta_2'$} (l2);
\draw [->, ultra thick]	(v2) -- node [xshift=4,yshift=-1,fill=white] {$\theta_2'$} (l3);
\draw [->, ultra thick]	(v2) -- node [below, sloped] {$\theta_3'$} (l4);
\draw [->]	(v3) -- node [xshift=5,fill=white] {$\theta_7'$} (l6);
\draw [->]	(v3) -- node [above, sloped] {$\theta_6'$} (l5);
\draw [->]	(v3) -- node [below, sloped] {$\theta_8'$} (l7);
\end{tikzpicture}}
\caption{Two polynomially equivalent staged trees with the same indeterminates $\theta_i=\theta_i'$, $i=1,\ldots,8$, and interpolating polynomials $c_{\T}(\ttheta)=\theta_1+(\theta_2(\theta_4+\theta_5)+ \theta_3(\theta_4(\theta_6+\theta_7+\theta_8)+\theta_5))$ and $c_{\St}(\ttheta')=\theta_1'+\theta_4'(\theta_2'+\theta_3' (\theta_6'+\theta_7'+\theta_8'))+\theta_5'(\theta_2'+\theta_3'))$ given in the respective tree-compatible factorization. The map $\smap:\pt\mapsto\st$ is a swap.}\label{fig:polyequiv}
\end{figure}
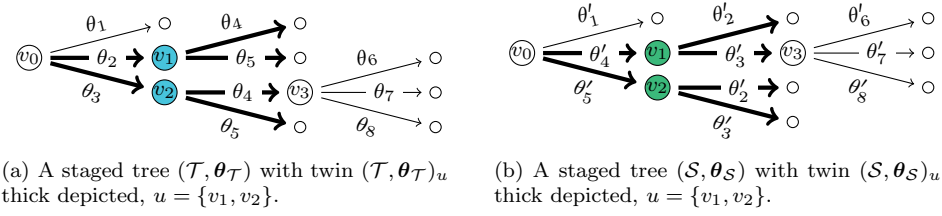
We henceforth call a composition of swaps for which floret parameter vectors are invariant a \emph{floret-swap} and a composition of swaps which swaps all edges at a fixed distance from the root a \emph{level-swap}. For instance, the swap in \cref{fig:polyequiv}\label{exp:swap} is not a floret-swap because the root-vector $(\theta_1,\theta_2,\theta_3)$ is not a vector in $\st$.  Conversely, $(\theta_1,\theta_4,\theta_5)$ is not a floret parameter vector in $\pt$. This implies that $\pt$ and $\st$ have different local sum-to-$1$ conditions on their primitive probabilities. By \cref{lem:polyimpliesstat}, both represent the same model. So even if the numerical value of say $\theta_1=\theta(e_1)$ is different in $\pt$ and $\st$---we have indicated this using labels $\theta_1=\theta_1'$ in \cref{fig:polyequiv}---via a renormalization it is still the probability of the event $\iotat^{-1}(\{\lambda\in\Lambda(\T)~|~e_1\in E(\lambda)\})\subseteq\Omega$ depicted by both graphs. The meaning of this parameter is thus unchanged and can be identified across different representations.
\medskip

We can now obtain the following result, which enables us to both graphically and algebraically move around a class of polynomially equivalent trees.

\begin{prop}\label{prop:swaps}
Two square-free staged trees $\pt$ and $\st$ are polynomially equivalent if and only if there exists a finite composition of na\"ive swaps $\smap_1,\ldots,\smap_l$, $l\in\N$, for which $\smap_l\circ\smap_{l-1}\circ\ldots\circ\smap_1:\pt\mapsto\st$ is a swap.
\end{prop}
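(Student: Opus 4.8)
The plan is to prove the two implications separately; the forward (\enquote{if}) direction is short, while the converse is the substantive part, which I will reduce to a purely combinatorial statement about polynomials. For the forward direction it suffices to show that a single na\"ive swap already preserves polynomial equivalence. So let $\smap\colon\pt\mapsto\st$ be the na\"ive swap associated with a twin $\pt_u\subseteq\pt$. By \cref{eq:swap} it replaces, inside the nested bracketing of the interpolating polynomial, a subexpression $\sum_{e\in E(v_0)}\theta(e)\bigl(\sum_{e'\in E(v)}\theta(e')(\,\cdot\,)\bigr)$ by the reordered sum $\sum_{e'\in E(v)}\theta(e')\bigl(\sum_{e\in E(v_0)}\theta(e)(\,\cdot\,)\bigr)$ and leaves everything else fixed; in particular $\pt$ and $\st$ carry the same edge labels. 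Moreover, along each root-to-leaf path the swap merely transposes the two consecutive twin edges, so by commutativity of multiplication and the atom identification of \cref{eq:iota}, $\T$ and $\St$ have the same underlying space $\Omega$ and attach the same atomic monomial $\pitheta(\iotat(\omega))$ to every $\omega\in\Omega$. Hence $c_{g,\St}=c_{g,\T}$ for all $g$, i.e.\ $\pt$ and $\st$ are polynomially equivalent by \cref{def:polyequiv}; applying this to $\smap_1,\dots,\smap_l$ in turn proves the \enquote{if} direction.

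For the converse, the first move is to translate everything into a statement about polynomials. If $\pt$ and $\st$ are polynomially equivalent then, taking $g=1$ in \cref{def:polyequiv}, their interpolating polynomials coincide formally; write $c=c_\T=c_\St$, and note that since both trees are square-free each atom of $\Omega$ coincides with its atomic monomial (as observed after \cref{lem:polyimpliesstat}), so the whole atom--monomial data agrees. By \cref{prop:polyequiv} the map $\emap$ is invertible, so $\pt=\emap(s_1(c))$ and $\st=\emap(s_2(c))$ for two tree-compatible factorizations $s_1(c),s_2(c)$ of the same $c$; and by \cref{eq:swap} each na\"ive swap corresponds under $\emap$ to an elementary distributive rearrangement that transposes two adjacent nested sums of a tree-compatible factorization, legitimate precisely because the inner summation's index set --- being the common floret of a twin --- does not depend on the outer index. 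It therefore suffices to prove that \emph{any two tree-compatible factorizations of a fixed polynomial $c$ are linked by a finite sequence of such elementary rearrangements}. Granting this we obtain na\"ive swaps $\smap_1,\dots,\smap_l$ with $\smap_l\circ\cdots\circ\smap_1(\pt)=\st$, and since $\st$ is by hypothesis a staged tree this composite is a swap in the sense of \cref{def:swap}. (The intermediate probability trees need not be square-free, which is harmless: na\"ive swaps and \cref{prop:polyequiv} are stated for arbitrary probability trees.)

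To establish the combinatorial claim I would induct on the size of the trees. The base case is a polynomial of degree one, whose unique tree-compatible factorization is the star. For the inductive step, given two trees $\pt,\st$ of $c$, I would first use a finite sequence of na\"ive swaps on $\pt$ to make its root floret agree with that of $\st$; once the root florets coincide, $c$ decomposes as $\sum_\theta \theta\,c_\theta$ over the common root floret, $\pt$ and $\st$ restrict to trees of each tree-compatible factor $c_\theta$, and applying the inductive hypothesis inside each subtree --- which is legitimate because a swap performed inside a subtree is a na\"ive swap of the ambient tree --- links $\pt$ to $\st$. To carry out the first step one forms the twin at the root out of those children of $v_0$ that share a common floret (discarding leaves and children lying in other stages), exploiting the freedom noted after \cref{def:swap} that a twin may be a truncated depth-two subtree whose deeper florets are re-attached to the swapped floret via the identification \cref{eq:iota}, and brings the labels of the target root floret up to the root one level at a time.

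The main obstacle is exactly this first step: reorganizing the top of one tree-compatible factorization into that of another using only the local moves \cref{eq:swap}. The algebra of an individual move is trivial --- it is the distributive law --- but orchestrating a terminating sequence of them is not. One needs a potential, for instance a measure of the discrepancy between the current top structure and the target, that can be made to decrease strictly by a well-chosen swap, together with a guarantee that such a swap always exists, i.e.\ that one is never stuck without an available twin before the target root floret is reached. Proving this requires a careful analysis of which florets of a tree-compatible factorization can be commuted past which, and of how the truncation-and-regrafting of a twin interacts with the rest of the tree; this is where essentially all of the bookkeeping will lie.
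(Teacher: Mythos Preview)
Your approach matches the paper's: use the bijection $\emap$ of \cref{prop:polyequiv} to identify $\pt$ and $\st$ with two tree-compatible factorizations $s_1(c)$ and $s_2(c)$ of the common interpolating polynomial, and then connect these by finitely many applications of the distributive law, each of which corresponds under $\emap$ to a na\"ive swap via \cref{eq:swap}. The paper's proof is in fact terser than your outline --- it simply asserts that \enquote{clearly, one factorization is transformed into the other by applying the distributive law of $+$ and $\cdot$ a finite number of times} and stops there, without setting up the induction on tree size or addressing the termination bookkeeping you flag as the main obstacle; so at the point you identify as delicate, your proposal already contains more detail than the published argument.
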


\begin{proof}
Let $\pt=\emap(s_1(c(\ttheta)))$ and $\st=\emap(s_2(c(\ttheta)))$ be polynomially equivalent staged trees with a common interpolating polynomial $c$ and corresponding tree-compatible factorizations $s_1$ and $s_2$ as in \cref{eq:interpolfactored}. Here, $\emap$ denotes the map from \cref{prop:polyequiv}.
Clearly, one factorization $s_1(c(\ttheta))$ is transformed into the other $s_2(c(\ttheta))$ by applying the distributive law of $+$ and $\cdot$ a finite number of times. Hence, we can define a map $\tilde{s}:s_1(c(\ttheta))\mapsto s_2(c(\ttheta))$ performing these calculations on the subsums of $c$ as in~\cref{eq:swap}. Therefore, 
\begin{equation}
\smap:~\pt\overset{\emap^{-1}}{\mapsto}s_1(c(\ttheta))\overset{\tilde{s}}{\mapsto}s_2(c(\ttheta))\overset{\emap^{-1}}{\mapsto}\st
\end{equation}
is a map which performs a finite number of swaps on the to $\tilde{s}$ corresponding twins and thus transforms $\pt$ into $\st$.
\end{proof}

Thus, the polynomial equivalence class of a staged tree can be fully traversed by an algebraic resummation operation or, equivalently, by local graph transformations. Note that this operator is a close analogue to the \emph{arc reversal} in BN models~\cite{schachter}. These, just like swaps, allow us to traverse the class of all graphical representations of the same model, while renormalizing (but not marginalizing) the associated probability mass function.

\begin{bsp}[\Cref{bsp:lorna1} continued]\label{bsp:twistsandarcs}
The staged tree $\pt$ in \cref{fig:lorna1} contains two twins: $\pt_{\text{blue}}$ around the stage $u_{\text{blue}}=\{v_1,v_2\}$ and $\pt_{\text{green}}$ around the stage $u_{\text{green}}=\{v_3,v_4\}$.
Applying a level-swap on $\pt$ which swaps both of these twins, we obtain a new tree $\st_1$ depicted in \cref{fig:arcreversal}. In $\st_1$, the edges emanating from the root now correspond to the random variable \enquote{social background and life events} rather than \enquote{social and economic background}. This application of a swap corresponds to a renormalization of an underlying probability mass function $p(s,e,l)=p(s,e)p(l|s)$ to $p(s,l)p(e|s)$, for $s,e,l\in\{\text{high},\text{low}\}$. This result can equivalently be achieved by applying an arc reversal on an alternative representation of this model in terms of a decomposable acyclic digraph.

Unlike $\st_1$, the staged tree $\st_2$ in \cref{fig:twist} where a swap has been applied only on $\pt_{\text{blue}}$, cannot be straightforwardly identified with a BN model. In particular, the edges emanating from the root now correspond to a new random variable $X$ \enquote{life events in low social background and economic situation in high social background}, and the variable $Y$ associated to leaf-edges changes accordingly:
\begin{equation*}
X=\begin{cases}(S,L) &\text{if }S=0\\ (S,E) &\text{if }S=1\end{cases}\quad\text{and }~ Y=\begin{cases}E|L &\text{if }S=0\\ L|E&\text{if }S=1.\end{cases}
\end{equation*}
\end{bsp}

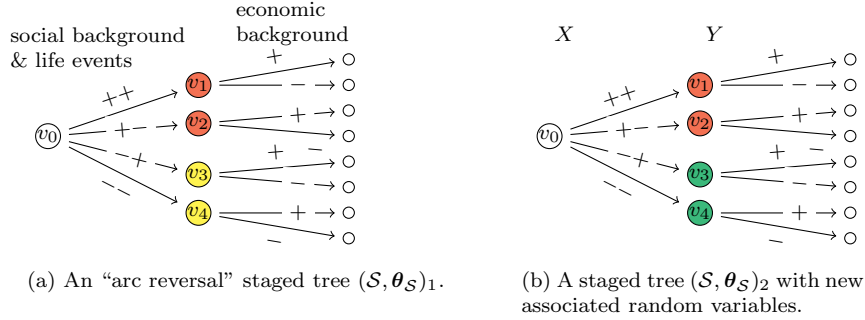
\begin{figure}[tb]
\centering
\subfloat[An \enquote{arc reversal} staged tree $\st_1$.]{
\centering
\begin{tikzpicture}
\renewcommand{\csep}{2}
\renewcommand{\rsep}{0.34}
\node (v0) at (0*\csep,4*\rsep) {\vertex{0}};
\node (v1) at (1*\csep,6*\rsep) {\stage{Red!70}{1}};
\node (v2) at (1*\csep,4.5*\rsep) {\stage{Red!70}{2}};
\node (v3) at (1*\csep,2.5*\rsep) {\stage{Yellow!80}{3}};
\node (v4) at (1*\csep,1*\rsep) {\stage{Yellow!80}{4}};
\node (v5) at (2*\csep,7*\rsep) {\kreis};
\node (v6) at (2*\csep,6*\rsep) {\kreis};
\node (v7) at (2*\csep,5*\rsep) {\kreis};
\node (v8) at (2*\csep,4*\rsep) {\kreis};
\node (v9) at (2*\csep,3*\rsep) {\kreis};
\node (v10) at (2*\csep,2*\rsep) {\kreis};
\node (v11) at (2*\csep,1*\rsep) {\kreis};
\node (v12) at (2*\csep,0*\rsep) {\kreis};

\draw [->] 	(v0) -- node [above, sloped] {$++$} (v1);
\draw [->] 	(v0) -- node [xshift=3,fill=white,sloped] {$+-$} (v2);
\draw [->] 	(v0) -- node [xshift=3,fill=white,sloped] {$-+$} (v3);
\draw [->] 	(v0) -- node [below, sloped] {$--$} (v4);

\draw [->] 	(v1) -- node [xshift=8,fill=white] {$-$} (v6);
\draw [->] 	(v1) -- node [above, sloped] {$+$} (v5);
\draw [->] 	(v2) -- node [xshift=8,fill=white,sloped] {$+$} (v7);
\draw [->] 	(v2) -- node [below, sloped, xshift=15] {$-$} (v8);
\draw [->] 	(v3) -- node [above, sloped] {$+$} (v9);
\draw [->] 	(v3) -- node [xshift=8,fill=white,sloped] {$-$} (v10);
\draw [->] 	(v4) -- node [xshift=8,fill=white] {$+$} (v11);
\draw [->] 	(v4) -- node [below, sloped] {$-$} (v12);

\node at (0.5*\csep,7.5*\rsep) [text width=3cm] {social background\\ \& life events};
\node at (2*\csep,8.4*\rsep) [text width=3cm] {economic\\ background};
\end{tikzpicture}
\label{fig:arcreversal}}
\qquad
\subfloat[A staged tree $\st_2$ with new associated random variables.]{
\centering
\begin{tikzpicture}
\renewcommand{\csep}{2}
\renewcommand{\rsep}{0.34}
\node (v0) at (0*\csep,4*\rsep) {\vertex{0}};
\node (v1) at (1*\csep,6*\rsep) {\stage{Red!70}{1}};
\node (v2) at (1*\csep,4.5*\rsep) {\stage{Red!70}{2}};
\node (v3) at (1*\csep,2.5*\rsep) {\stage{Green!70}{3}};
\node (v4) at (1*\csep,1*\rsep) {\stage{Green!70}{4}};
\node (v5) at (2*\csep,7*\rsep) {\kreis};
\node (v6) at (2*\csep,6*\rsep) {\kreis};
\node (v7) at (2*\csep,5*\rsep) {\kreis};
\node (v8) at (2*\csep,4*\rsep) {\kreis};
\node (v9) at (2*\csep,3*\rsep) {\kreis};
\node (v10) at (2*\csep,2*\rsep) {\kreis};
\node (v11) at (2*\csep,1*\rsep) {\kreis};
\node (v12) at (2*\csep,0*\rsep) {\kreis};

\draw [->] 	(v0) -- node [above, sloped] {$++$} (v1);
\draw [->] 	(v0) -- node [xshift=3,fill=white,sloped] {$+-$} (v2);
\draw [->] 	(v0) -- node [xshift=3,fill=white,sloped] {$-+$} (v3);
\draw [->] 	(v0) -- node [below, sloped] {$--$} (v4);

\draw [->] 	(v1) -- node [xshift=8,fill=white] {$-$} (v6);
\draw [->] 	(v1) -- node [above, sloped] {$+$} (v5);
\draw [->] 	(v2) -- node [xshift=8,fill=white,sloped] {$+$} (v7);
\draw [->] 	(v2) -- node [below, sloped, xshift=15] {$-$} (v8);
\draw [->] 	(v3) -- node [above, sloped] {$+$} (v9);
\draw [->] 	(v3) -- node [xshift=8,fill=white,sloped] {$-$} (v10);
\draw [->] 	(v4) -- node [xshift=8,fill=white] {$+$} (v11);
\draw [->] 	(v4) -- node [below, sloped] {$-$} (v12);

\node at (0.1*\csep,8*\rsep) {$X$};
\node at (1.1*\csep,8*\rsep) {$Y$};
\end{tikzpicture}\label{fig:twist}}
\caption{Two staged trees which are polynomially equivalent to the one from \cref{bsp:lorna1}. See \cref{bsp:twistsandarcs}.}
\end{figure}

The example above provides a very simple illustration of how the statistical equivalence class of a staged tree (or CEG) can be so much larger than that of a BN. It also demonstrates how staged trees can implicitly generate relationships between new random variables, constructed as functions of the original ones: possibly useful in later interpretative analysis. A more detailed discussion of this process is given in \cref{sect:causal}. 

\subsection{Statistical Equivalence and the Resize Operator}\label{sub:statequiv}

We have seen in \cref{rem:interpol} that the network polynomial can be used to calculate probabilities of events represented by a staged tree. Thus, when leaving the symbolic framework and substituting values for the edge parameters, the interpolating polynomial is clearly invariant for a class of statistically equivalent staged trees. So in order to extend our characterization of polynomial equivalence classes to the whole statistical equivalence class, we will need to reparametrize between two given descriptions without violating the model assumptions. We define a second operator below which will again enable us to achieve this aim constructively.\medskip

We henceforth call the pair $\pt'\subseteq\pt$ a \emph{subgraph} if it is a subtree with inherited edge labels whose root might have only one emanating edge, not necessarily two as required in an event tree.

\begin{mydef}[Resize]\label{def:resize}
Let $\pt$ be a staged tree and let $\pt'\subseteq\pt$ be a subgraph. We denote by $\fmap:~\pt\mapsto\st$ the map which transforms this subgraph into a floret $\ft$ with labels $\ttheta_{\F}=\bigl(\pi_{\ttheta,\T'}(\lambda')~|~\lambda'\in\Lambda(\T')\bigr)$, while leaving the remaining graph invariant.
We call $\fmap$ and its inverse $\fmap^{-1}$ \emph{na\"ive resize} operators, and a \emph{resize} if $\st$ is a staged tree.
\end{mydef}

In terms of the atomic monomials, a na\"ive resize performs a substitution of products of primitive probabilities into degree $1$ monomials. By construction, atomic probabilities of root-to-leaf paths are invariant under this operation: see also \cref{bsp:saturated}. However, $\fmap$ is not necessarily a well-defined map between two \emph{staged} trees. The lemma below establishes useful criteria for a well-defined application of this operator.

\begin{lem}\label{lem:naiveresize}
Let $\pt$ be a staged tree and $l\in\N$. A composition of na\"ive resizes $\fmap=\fmap_l\circ\ldots\circ\fmap_1$ applied to $\pt$ is a resize if one of the following conditions is fulfilled:
\begin{enumerate}[a)]
\item $\fmap$ only acts on saturated subgraphs.
\item $\fmap$ only acts on subgraphs which are polynomially equivalent to each other and whose vertices are not in the same stage as vertices outside these subgraphs.
\end{enumerate}
\end{lem}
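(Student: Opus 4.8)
The plan is to verify, in each of the two cases, that the tree $\St$ produced by the na\"ive resize $\fmap$ is genuinely a staged tree — that is, that the stage relation of \cref{def:stages} is respected — and then to invoke \cref{lem:polyimpliesstat} (via the fact that atomic monomials, and hence the network polynomial, are unchanged by a substitution of products into degree-$1$ monomials) to conclude statistical equivalence with $\pt$. The only thing that can go wrong when passing from $\pt$ to $\St$ is that collapsing a subgraph $\pt'$ to a floret $\ft$ either (i) destroys a stage that was partly inside and partly outside $\pt'$, or (ii) fails to produce matching florets where two resized subgraphs ought to remain in a common stage. Cases a) and b) are precisely the two hypotheses that rule out these two failure modes, so the proof is a case analysis confirming this.

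For case a), I would argue as follows. If $\pt'$ is saturated, then by \cref{bsp:saturated} its leaves may be reattached directly to a single floret $\ft$ whose edge labels $\pi_{\ttheta,\T'}(\lambda')$ range over the whole simplex $\Delta_{\#\Lambda(\T')-1}^\circ$ as $\ttheta$ varies; the new floret parameter vector is a fresh, algebraically free vector that is not constrained to equal any other floret vector in $\St$. So no new stage coincidences are forced, and any stage of $\pt$ lying entirely outside $\pt'$ survives verbatim in $\St$. A subtlety to check: a saturated subgraph cannot contain a vertex that is staged with a vertex outside it, because saturation means the parameters inside $\pt'$ are unconstrained, while a stage would impose an equality constraint; hence the situation (i) above simply does not arise for saturated $\pt'$. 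Iterating over the $l$ na\"ive resizes — each acting on a saturated subgraph of the tree produced by the previous one — keeps $\St$ a staged tree at every step, so the composition $\fmap$ is a resize.

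For case b), suppose the na\"ive resizes act on subgraphs $\pt'_1,\dots,\pt'_m$ that are pairwise polynomially equivalent and whose internal vertices are staged only with each other (not with any vertex outside). Polynomial equivalence of the $\pt'_i$ gives, by \cref{def:polyequiv} and the square-free assumption, that $\pi_{\ttheta,\T'_i}$ and $\pi_{\ttheta,\T'_j}$ are the same polynomial up to the identification of atoms; hence the resulting floret-label vectors $\bigl(\pi_{\ttheta,\T'_i}(\lambda')\bigr)$ agree up to a permutation of components, so the $m$ collapsed florets all land in one common stage in $\St$ — the stage structure internal to the $\pt'_i$ is faithfully replaced by a single stage on the new florets. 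Because, by hypothesis, no vertex of any $\pt'_i$ was staged with a vertex lying outside $\bigcup_i \pt'_i$, no external stage is broken: the vertices of $\pt\setminus\bigcup_i\pt'_i$ retain exactly their old parameter vectors and old stage memberships. Thus $\St$ satisfies \cref{def:stages}, i.e.\ $\fmap$ is a resize. Finally, in both cases, the substitution underlying a na\"ive resize leaves every atomic monomial $\pi_{\ttheta,\T}(\iotat(\omega))$ numerically equal to its image (it is merely rewritten as a single indeterminate whose value is that product), so $c_{g,\St}=c_{g,\T}$ for all $g$ after the identification of atoms; \cref{lem:polyimpliesstat} then yields $\mathbb{P}_{\St}=\treemodel$, confirming that a resize is in particular a statistical equivalence.

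The main obstacle I anticipate is the bookkeeping in case b): one must be careful that polynomial equivalence of the $\pt'_i$ — which a priori only says their \emph{interpolating} polynomials coincide — really does force the collapsed florets into a single stage, which requires that the monomials $\pi_{\ttheta,\T'_i}(\lambda')$ match up \emph{atom by atom} and not merely as unordered sums. This is exactly where square-freeness (formal injectivity of $\lambda\mapsto\prod_{e\in E(\lambda)}\theta(e)$, noted after \cref{lem:polyimpliesstat}) does the work, and the proof should flag that explicitly rather than treat it as routine.
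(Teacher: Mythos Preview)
Your argument for parts a) and b) is correct and follows the same case analysis as the paper's (much terser) proof; your explicit invocation of square-freeness in case b) to justify the atom-by-atom identification is a helpful clarification that the paper leaves implicit.

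One point to tighten: the final paragraph proves more than the lemma asks. By \cref{def:resize}, a resize is merely a na\"ive resize whose image $\st$ is a staged tree; statistical equivalence of $\pt$ and $\fmap\pt$ is the content of \cref{lem:resize}, not of this lemma. More importantly, your route to that extra conclusion via \cref{lem:polyimpliesstat} does not quite work: after a resize the trees $\pt$ and $\st$ no longer share the same set of edge labels (the whole point of the operation is to replace products $\prod_{e\in E(\lambda')}\theta(e)$ by fresh degree-one indeterminates), so they cannot be polynomially equivalent in the sense of \cref{def:polyequiv}, and $c_{g,\St}=c_{g,\T}$ fails as a formal identity of polynomials. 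Statistical equivalence instead follows directly from the observation you already made---that atomic probabilities are numerically unchanged---without passing through polynomial equivalence; that is exactly how the paper handles it in \cref{lem:resize}.
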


\begin{proof}
a) Because the image $\fmap\pt=\st$ of a staged tree is a probability tree and because by assumption the stage sets of image and preimage coincide, clearly also $\st\in\ptrep$ is a staged tree.

b) Because all subgraphs $\pt'$, $\pt''\subseteq\pt$ that $\fmap$ acts on are polynomially equivalent, they are also statistically equivalent: see \cref{lem:polyimpliesstat}. So after resizing, we identify the atomic probabilities $\pi_{\ttheta,\T'}(\lambda')=\pi_{\ttheta,\T''}(\lambda'')$ of subpaths $\lambda'$, $\lambda''$ which have the same atomic monomial in $\pt$. Thus, the image $\st=\fmap\pt$ is a staged tree where the stages are given by these identified (formerly atomic now) primitive probabilities.
\end{proof}

Note that case (a) in \cref{lem:naiveresize} enables us to contract subgraphs which do not contain any stage information, so are in that sense not informative to the model. Analogous operations are often performed on BN models where the cliques of a decomposable model do not contain any conditional independence information and can hence be treated as a joint random variable without leaving the model class~\cite{lauritzen}. Case (b) enables us to directly identify atomic monomials of polynomially equivalent subgraphs rather than repeating stage equations edge by edge. 
Note that if these conditions are violated, then a na\"ive resize can take us out of the statistical equivalence class of a staged tree.

\begin{lem}\label{lem:resize}
Let $\pt$ be a staged tree and $\fmap$ a resize operator, possibly a composition of na\"ive resizes. Then $\pt$ and $\fmap\pt$ are statistically equivalent staged trees.
\end{lem}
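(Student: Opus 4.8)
The statement bundles two claims: that $\fmap\pt$ is a staged tree, and that it is statistically equivalent to $\pt$. The first is immediate — \cref{def:resize} \emph{calls} a (composition of) na\"ive resize(s) a resize precisely when its image is a staged tree — so all of the content sits in the identity $\treemodel=\mathbb{P}_{\fmap\pt}$.

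The plan is to follow what a na\"ive resize does to a distribution rather than to a polynomial. Since the common atom space $\Omega$ is fixed through the bijections $\iotat$, statistical equivalence of $\pt$ and $\st=\fmap\pt$ is the same as the two parametrisations $\ttheta\mapsto\Ptheta$ having equal image in $\Delta^\circ_{\#\Omega-1}$. The crucial fact, noted right after \cref{def:resize}, is that a na\"ive resize substitutes products of primitive probabilities into degree-$1$ monomials without altering any atomic monomial: if $\fmap$ contracts a subgraph $\T'\subseteq\T$ to a floret $\F$, then each root-to-leaf path $\lambda$ of $\T$ running through $\T'$ has its segment $\prodm{e\in E(\lambda),\,e\text{ in }\T'}\theta(e)=\pi_{\ttheta,\T'}(\lambda')$ replaced by the single label of the corresponding edge of $\F$, whose value is exactly that same $\pi_{\ttheta,\T'}(\lambda')$. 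Iterating over the na\"ive resizes that make up $\fmap$, the parameter vector $\ttheta'$ of $\st$ induced by $\ttheta$ therefore satisfies $P_{\ttheta'}=\Ptheta$ on all of $\Omega$.

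From this I would read off the two inclusions. For $\treemodel\subseteq\mathbb{P}_{\st}$: let $\ttheta$ range over the parameter space of the staged tree $\pt$ and form $\ttheta'$ as above. Florets untouched by $\fmap$ keep their labels, hence inherit their simplex and stage constraints; each new floret $\F$ carries the vector $\bigl(\pi_{\ttheta,\T'}(\lambda')~|~\lambda'\in\Lambda(\T')\bigr)$, which lies in $\Delta^\circ_{\#E(\F)-1}$ because the floret sum-to-$1$ conditions of \cref{def:probtree} make $\pi_{\ttheta,\T'}$ a strictly positive probability mass function on $\Lambda(\T')$; and if $\fmap$ resizes several subgraphs simultaneously their florets land in a common stage of $\st$, consistently, since those subgraphs are polynomially — hence, by \cref{lem:polyimpliesstat}, statistically — equivalent, so that the atomic monomials identified across them agree. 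Thus $\ttheta'$ is a legitimate parameter of the staged tree $\st$, and the distribution $\ppitheta$ is realised by $\st$, so lies in $\mathbb{P}_{\st}$. For the reverse inclusion $\mathbb{P}_{\st}\subseteq\treemodel$ I would appeal to \cref{def:resize} once more: the inverse of a (composition of) na\"ive resize(s) is again one, and since $\fmap^{-1}\st=\pt$ is a staged tree by hypothesis, $\fmap^{-1}$ is itself a resize; applying the previous argument to $\fmap^{-1}:\st\mapsto\pt$ yields $\mathbb{P}_{\st}\subseteq\treemodel$, and the two inclusions close the proof.

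The step I expect to be the real obstacle is the verification hidden inside $\treemodel\subseteq\mathbb{P}_{\st}$ (and its mirror for $\fmap^{-1}$): that the pushed-forward parameter never escapes the staged-tree model, equivalently that contracting a subgraph, \emph{whenever it does return an honest staged tree}, does not enlarge the represented family of distributions. This is exactly where the hypothesis that $\fmap$ is a resize — not merely a na\"ive one — must bite: it should force the contracted subgraphs either to be saturated, so that a single free floret already realises $\mathbb{P}_{\T'}$ (cf.\ \cref{bsp:saturated}), or to occur in polynomially equivalent families whose florets are re-identified in a common stage of $\st$ — precisely the two configurations isolated in \cref{lem:naiveresize} — so that the stage constraints of $\st$ cut the new floret labels down to exactly the vectors already realised within $\pt$. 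Making that reduction airtight, rather than the routine bookkeeping with $\iotat$ and the simplices, is the heart of the matter.
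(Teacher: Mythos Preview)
The paper's own proof is a single sentence: ``This result follows immediately from the definition.'' You have taken a much more explicit route, unpacking the claim into two model inclusions and tracing the pushed-forward parameters through each; that is a genuinely different approach, and your bookkeeping with the simplices and with $\fmap^{-1}$ is sound as far as it goes.

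But your final paragraph does not close the argument --- it opens a new one. You propose that the hypothesis ``$\fmap$ is a resize'' must force the contracted subgraphs into one of the two configurations isolated in \cref{lem:naiveresize}, and you call that reduction ``the heart of the matter'' without establishing it. That would be the \emph{converse} of \cref{lem:naiveresize}, which the paper never states, never proves, and does not need: \cref{lem:naiveresize} supplies \emph{sufficient}, not necessary, criteria for a na\"ive resize to be a resize. The paper's stance is simply that once $\fmap$ is a resize the image $\st$ is by definition a staged tree, and the remark immediately after \cref{def:resize} --- that a na\"ive resize leaves every atomic probability invariant --- then makes \cref{lem:resize} a tautology. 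The verification you flag (that the pushed-forward parameter ``never escapes the staged-tree model'') is exactly what the authors absorb into the phrase ``$\st$ is a staged tree''; trying to re-derive that guarantee from the structural side-conditions of \cref{lem:naiveresize} is a separate and strictly harder problem, and leaving it as a conjecture means your proof, as written, is incomplete.
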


This results follows immediately from the definition.

Finally, the resize in conjunction with the swap operator now enables us to traverse the whole equivalence class of a given staged tree.

\begin{thm}\label{thm:thisisit}
Two square-free staged trees $\pt$ and $\st$ are statistically equivalent if and only if there exists a map $\mathfrak{m}:\pt\mapsto\st$ which is a finite composition of resizes and swaps.
\end{thm}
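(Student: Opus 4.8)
The plan is to prove the two implications separately, reducing the nontrivial ``only if'' direction to \cref{prop:swaps} by first using resizes to strip away all non-substantive structure. The ``if'' direction is immediate from what is already established: every swap produces, by the construction preceding \cref{def:swap} together with \cref{prop:polyequiv}, a polynomially equivalent staged tree, hence a statistically equivalent one by \cref{lem:polyimpliesstat}; every resize produces a statistically equivalent staged tree by \cref{lem:resize}; and statistical equivalence is transitive, so any finite composition $\mathfrak{m}$ of resizes and swaps carries $\pt$ to a statistically equivalent staged tree.

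For the ``only if'' direction, suppose $\pt$ and $\st$ are statistically equivalent. The idea is to contract each tree to a canonical \emph{maximally resized} representative and connect these by swaps. Concretely, I would apply resizes repeatedly --- each one of the two types permitted by \cref{lem:naiveresize}, i.e.\ collapsing a saturated subgraph, or collapsing a family of mutually polynomially equivalent subgraphs whose vertices lie in no stage with vertices outside --- until no further contraction is possible. Each step is a genuine resize by \cref{lem:naiveresize}, the procedure terminates because every non-trivial contraction strictly decreases $\#V$, and square-freeness is preserved throughout. Writing $(\T_0,\ttheta_{\T_0})$ and $(\St_0,\ttheta_{\St_0})$ for the resulting staged trees, the key claim is that these are \emph{polynomially} equivalent. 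Granting this, \cref{prop:swaps} yields a finite composition of swaps $(\T_0,\ttheta_{\T_0})\mapsto(\St_0,\ttheta_{\St_0})$, and since the contraction $\st\mapsto(\St_0,\ttheta_{\St_0})$ is a resize between staged trees its inverse is again a resize by \cref{def:resize}; hence
\begin{equation*}
\pt \;\xrightarrow{\;\text{resizes}\;}\; (\T_0,\ttheta_{\T_0}) \;\xrightarrow{\;\text{swaps}\;}\; (\St_0,\ttheta_{\St_0}) \;\xrightarrow{\;\text{resizes}\;}\; \st
\end{equation*}
is the desired map $\mathfrak{m}$.

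The main obstacle is precisely the key claim: that the maximally resized form is well defined up to polynomial equivalence (a confluence statement for the resize reduction) and, more substantively, that statistically equivalent staged trees have polynomially equivalent maximally resized forms. This is where the algebra of \cref{sub:polyequivalence} does the work. One has to show that, once every saturated floret and every duplicated polynomially equivalent subgraph has been contracted, the surviving nested factorization of $\Ptheta$ --- equivalently, the interpolating polynomial of $(\T_0,\ttheta_{\T_0})$ together with its edge labels, read off via the identification $\iotat$ of atoms with monomials available for square-free trees --- is forced by the model $\treemodel=\mathbb{P}_{\st}$ alone and not by the particular graph. In other words, every system of monomial substitutions compelled by the polynomial constraints defining $\treemodel$ leads, after clearing all saturated structure, to the same tree-compatible bracketing of $c$ up to the reorderings already absorbed by \cref{def:treecompatible} (hence by swaps). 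The remaining points are routine bookkeeping: verifying that each na\"ive resize and na\"ive swap appearing in the composition lands back in the class of (square-free) staged trees, which for the final leg $(\St_0,\ttheta_{\St_0})\mapsto\st$ is exactly the invertibility clause of \cref{def:resize} applied to the resize $\st\mapsto(\St_0,\ttheta_{\St_0})$.
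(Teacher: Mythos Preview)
Your ``if'' direction is correct and essentially identical to the paper's argument.

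For the ``only if'' direction, however, your route diverges from the paper's and the divergence creates a real gap. You propose to contract each tree to a \emph{maximally resized} normal form using only the two resize types of \cref{lem:naiveresize}, and then to argue that the two normal forms must be polynomially equivalent. You correctly flag this confluence statement as the crux, but it is not a bookkeeping detail: nothing in the paper guarantees that statistically equivalent staged trees, once stripped of saturated subgraphs and duplicated polynomially equivalent subgraphs, must share the same edge labels up to swap. Two statistically equivalent trees may carry genuinely different parametrizations that are neither saturated nor duplicated in the sense of \cref{lem:naiveresize}(a)--(b), so both would already be ``maximally resized'' in your sense while failing to be polynomially equivalent. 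Restricting yourself to the two special resize types of \cref{lem:naiveresize} is exactly what blocks the argument; the theorem allows the full class of resizes from \cref{def:resize}.

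The paper avoids this entirely by a direct construction rather than a normal-form argument. It isolates a subtree $\pt'\subseteq\pt$ (and the corresponding $\st'\subseteq\st$) containing all root-to-leaf paths whose atomic monomials fail to match formally, and then applies a na\"ive resize collapsing each of these subtrees to one and the same floret $\ft$. Because the atomic probabilities agree numerically, both collapses land on the same labelled floret, and the composite $\fmap_{\St}^{-1}\circ\fmap_{\T}$ is the required map. No confluence or uniqueness-of-normal-form statement is needed, and swaps are only invoked in the already-polynomially-equivalent case via \cref{prop:swaps}. If you want to salvage your strategy, you would need to prove the confluence claim independently, which is substantially harder than the theorem itself; the paper's shortcut is to pass through the trivial common refinement (a floret) rather than a minimal one.
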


\begin{proof}
First let $\pt$ and $\st$ be statistically equivalent staged trees. Then all identified root-to-leaf paths $\lambda'=\iota_{\St}(\iotat(\lambda))$ have equal atomic probabilities, $\pi_{\ttheta,\T}(\lambda)=\pi_{\ttheta',\St}(\lambda')$. If the above equality holds in a formal sense for every $\lambda\in\Lambda(\T)$ then $\pt$ and $\st$ are polynomially equivalent. In this case, \cref{lem:polyimpliesstat} states that a map exists between the two staged trees which is a composition of swaps, and thus proves the claim. If this is not the case, we denote by $\Lambda\subseteq\Lambda(\T)$ the set of root-to-leaf paths in $\T$ whose atomic monomials do not coincide formally with the corresponding atomic monomials in $\St$. Let $\pt'\subseteq\pt$ denote a subtree of $\T$ for which $\Lambda\subseteq\Lambda(\T')$, and define analogously the corresponding $\st'\subseteq\st$. These are the subtrees which are not polynomially equivalent. We define two na\"ive resize operators, $\fmap_{\T}:\pt'\mapsto\ft$ and $\fmap_{\St}:\st'\mapsto\ft$ which map those subtrees to the same floret. By \cref{lem:resize}, $\st'$, $\pt'$ and $\ft$ are statistically equivalent. Thus, there is a composition of resizes $\fmap=\fmap_{\St}^{-1}\circ\fmap_{\T}:\pt\mapsto\st$ between the statistically equivalent staged trees.

Now let $\mathfrak{m}$ be a transformation given by swaps and resizes between two staged trees $\pt$ and $\st$. If $\mathfrak{m}$ is a composition of swaps, then \cref{prop:polyequiv} ensures polynomial equivalence, and thus statistical equivalence by \cref{lem:polyimpliesstat}. If $\mathfrak{m}$ is a composition of resizes, then \cref{lem:resize} yields statistical equivalence. Clearly, also for the composition of both of these operators holds that $\pt$ and $\mathfrak{m}\pt=\st$ are statistically equivalent. The claim follows.
\end{proof}

\section{Analyzing a full Statistical Equivalence Class}\label{sect:causal}
We will now characterize properties of the statistical equivalence class of a staged tree inferred from a dataset. In particular, using the resize operator we will create new random variables describing the system and using the swap operator we will be able to give a putative causal interpretation to a depicted order of events.
\medskip

A staged tree model for the Christchurch Health and Development Study (CHDS)~\cite{chds} has been closely analyzed, e.g.\,in~\cite{lorna:bnCEG,cowelljim:causal}, and has been used to describe the interplay of the social support, the economic situation, hospital admissions and possible life events (e.g.\,divorce, redundancy of a parent) of a group of children over a fixed period of time. The staged tree $\pt$ in \cref{fig:majormap} was found using an MAP search~\cite{cowelljim:causal}. We will now apply \cref{thm:thisisit} to the statistical equivalence class $\ptrep$ in order to enrich our understanding of the model $\treemodel$ represented by that tree.

\begin{figure}[t]
\centering
\subfloat[The MAP staged tree $\pt$ from \cite{cowelljim:causal}.]{
\centering
\begin{tikzpicture}
\renewcommand{\csep}{1.7}
\renewcommand{\rsep}{0.23}

\node at (0.5*\csep,24.6*\rsep) [text width=2*\csep] {\mbox{background:}};
\node at (0.5*\csep,22.6*\rsep) [text width=1*\csep] {social};
\node at (1.2*\csep,22.6*\rsep) [text width=1*\csep] {economic};
\node at (2.2*\csep,25.6*\rsep) [text width=1*\csep] {hospital \mbox{admission}};
\node at (3.6*\csep,25.9*\rsep) {life events};

\node (v0) at (0.5*\csep,11*\rsep) {\vertex{0}};
\node (i1) at (1.25*\csep,16.5*\rsep) {\vertex{1}};
\node (i2) at (1.25*\csep,6.5*\rsep) {\vertex{2}};
\draw [dashed, ->] 	(v0) -- node [above, sloped] {\ls{high}} (i1);
\draw [dashed, ->] 	(v0) -- node [below, sloped] {\ls{low}} (i2);

\node (v1) at (2*\csep,20.5*\rsep) {\stage{Red!70}{3}};
\node (v2) at (2*\csep,14.5*\rsep) {\stage{Red!70}{4}};
\node (v3) at (2*\csep,8.5*\rsep) {\stage{Red!70}{5}};
\node (v4) at (2*\csep,2.5*\rsep) {\vertex{6}};
\draw [dashed, ->] 	(i1) -- node [above, sloped] {\ls{high}} (v1);
\draw [dashed, ->] 	(i1) -- node [below, sloped] {\ls{low}} (v2);
\draw [dashed, ->] 	(i2) -- node [above, sloped] {\ls{high}} (v3);
\draw [dashed, ->] 	(i2) -- node [below, sloped] {\ls{low}} (v4);

\node (w1) at (3*\csep,22*\rsep) {\stage{SkyBlue}{7}};
\node (w2) at (3*\csep,19*\rsep) {\stage{SkyBlue}{8}};
\node (w3) at (3*\csep,16*\rsep) {\stage{SkyBlue}{9}};
\node (w4) at (3*\csep,13*\rsep) {\stage{Green!70}{\text{{\fontsize{1mm}{2mm}\selectfont 10}}}};
\node (w5) at (3*\csep,10*\rsep) {\stage{SkyBlue}{\text{{\fontsize{1mm}{2mm}\selectfont 11}}}};
\node (w6) at (3*\csep,7*\rsep) {\stage{Green!70}{\text{{\fontsize{1mm}{2mm}\selectfont 12}}}};
\node (w7) at (3*\csep,4*\rsep) {\stage{Green!70}{\text{{\fontsize{1mm}{2mm}\selectfont 13}}}};
\node (w8) at (3*\csep,1*\rsep) {\stage{Green!70}{\text{{\fontsize{1mm}{2mm}\selectfont 14}}}};
\draw [->] 	(v1) -- node [above, sloped] {\ls{yes}} (w1);
\draw [->] 	(v1) -- node [below, sloped] {\ls{no}} (w2);
\draw [->] 	(v2) -- node [above, sloped] {\ls{yes}} (w3);
\draw [->] 	(v2) -- node [below, sloped] {\ls{no}} (w4);
\draw [->] 	(v3) -- node [above, sloped] {\ls{yes}} (w5);
\draw [->] 	(v3) -- node [below, sloped] {\ls{no}} (w6);
\draw [dashed, ->] 	(v4) -- node [above, sloped] {\ls{yes}} (w7);
\draw [dashed, ->] 	(v4) -- node [below, sloped] {\ls{no}} (w8);

\node (l1) at (4*\csep,23*\rsep) {\kreis};
\node (l2) at (4*\csep,22*\rsep) {\kreis};
\node (l3) at (4*\csep,21*\rsep) {\kreis};
\node (l4) at (4*\csep,20*\rsep) {\kreis};
\node (l5) at (4*\csep,19*\rsep) {\kreis};
\node (l6) at (4*\csep,18*\rsep) {\kreis};
\node (l7) at (4*\csep,17*\rsep) {\kreis};
\node (l8) at (4*\csep,16*\rsep) {\kreis};
\node (l9) at (4*\csep,15*\rsep) {\kreis};
\node (l10) at (4*\csep,14*\rsep) {\kreis};
\node (l11) at (4*\csep,13*\rsep) {\kreis};
\node (l12) at (4*\csep,12*\rsep) {\kreis};
\node (l13) at (4*\csep,11*\rsep) {\kreis};
\node (l14) at (4*\csep,10*\rsep) {\kreis};
\node (l15) at (4*\csep,9*\rsep) {\kreis};
\node (l16) at (4*\csep,8*\rsep) {\kreis};
\node (l17) at (4*\csep,7*\rsep) {\kreis};
\node (l18) at (4*\csep,6*\rsep) {\kreis};
\node (l19) at (4*\csep,5*\rsep) {\kreis};
\node (l20) at (4*\csep,4*\rsep) {\kreis};
\node (l21) at (4*\csep,3*\rsep) {\kreis};
\node (l22) at (4*\csep,2*\rsep) {\kreis};
\node (l23) at (4*\csep,1*\rsep) {\kreis};
\node (l24) at (4*\csep,0*\rsep) {\kreis};

\draw [->] (w1) -- node [midway,xshift=3*\csep,fill=white]{\lss{avg.}} (l2);
\draw [->] (w1) -- node [above,xshift=-3*\csep,sloped] {\lss{high}} (l1);
\draw [->] (w1) -- node [below,xshift=4*\csep] {\lss{low}} (l3);

\draw [->] (w2) -- node [midway,xshift=3*\csep,fill=white]{\lss{avg.}} (l5);
\draw [->] (w2) -- node [above,xshift=-3*\csep,sloped] {\lss{high}} (l4);
\draw [->] (w2) -- node [below,xshift=4*\csep] {\lss{low}} (l6);

\draw [->] (w3) -- node [midway,xshift=3*\csep,fill=white]{\lss{avg.}} (l8);
\draw [->] (w3) -- node [above,xshift=-3*\csep,sloped] {\lss{high}} (l7);
\draw [->] (w3) -- node [below,xshift=4*\csep] {\lss{low}} (l9);

\draw [->] (w4) -- node [midway,xshift=3*\csep,fill=white]{\lss{avg.}} (l11);
\draw [->] (w4) -- node [above,xshift=-3*\csep,sloped] {\lss{high}} (l10);
\draw [->] (w4) -- node [below,xshift=4*\csep] {\lss{low}} (l12);

\draw [->] (w5) -- node [midway,xshift=3*\csep,fill=white]{\lss{avg.}} (l14);
\draw [->] (w5) -- node [above,xshift=-3*\csep,sloped] {\lss{high}} (l13);
\draw [->] (w5) -- node [below,xshift=4*\csep] {\lss{low}} (l15);

\draw [->] (w6) -- node [midway,xshift=3*\csep,fill=white]{\lss{avg.}} (l17);
\draw [->] (w6) -- node [above,xshift=-3*\csep,sloped] {\lss{high}} (l16);
\draw [->] (w6) -- node [below,xshift=4*\csep] {\lss{low}} (l18);

\draw [->] (w7) -- node [midway,xshift=3*\csep,fill=white]{\lss{avg.}} (l20);
\draw [->] (w7) -- node [above,xshift=-3*\csep,sloped] {\lss{high}} (l19);
\draw [->] (w7) -- node [below,xshift=4*\csep] {\lss{low}} (l21);

\draw [->] (w8) -- node [midway,xshift=3*\csep,fill=white]{\lss{avg.}} (l23);
\draw [->] (w8) -- node [above,xshift=-3*\csep,sloped] {\lss{high}} (l22);
\draw [->] (w8) -- node [below,xshift=4*\csep] {\lss{low}} (l24);
\end{tikzpicture}\label{fig:majormap}
}
\quad
\subfloat[A staged tree $\st\in\ptrep$.]{
\centering
\begin{tikzpicture}
\renewcommand{\csep}{1.7}
\renewcommand{\rsep}{0.23}
\node at (0*\csep,21*\rsep) [text width=1*\csep] {\mbox{access} \mbox{to credit}};
\node at (1*\csep,25*\rsep) [text width=1*\csep] {hospital \mbox{admission}};
\node at (2.6*\csep,25*\rsep) {life events};

\node (v0) at (0*\csep,11*\rsep) {\vertex{0}};

\node (v1) at (1*\csep,20.5*\rsep) {\stage{Red!70}{1}};
\node (v2) at (1*\csep,14.5*\rsep) {\stage{Red!70}{2}};
\node (v3) at (1*\csep,8.5*\rsep) {\stage{Red!70}{3}};
\node (v4) at (1*\csep,4*\rsep) {\stage{Green!70}{4}};
\node (v5) at (1*\csep,1*\rsep) {\stage{Green!70}{5}};

\node (w1) at (2*\csep,22*\rsep) {\stage{SkyBlue}{6}};
\node (w2) at (2*\csep,19*\rsep) {\stage{SkyBlue}{7}};
\node (w3) at (2*\csep,16*\rsep) {\stage{SkyBlue}{8}};
\node (w4) at (2*\csep,13*\rsep) {\stage{Green!70}{9}};
\node (w5) at (2*\csep,10*\rsep) {\stage{SkyBlue}{\text{{\fontsize{1mm}{2mm}\selectfont 10}}}};
\node (w6) at (2*\csep,7*\rsep) {\stage{Green!70}{\text{{\fontsize{1mm}{2mm}\selectfont 11}}}};

\node (l1) at (3*\csep,23*\rsep) {\kreis};
\node (l2) at (3*\csep,22*\rsep) {\kreis};
\node (l3) at (3*\csep,21*\rsep) {\kreis};
\node (l4) at (3*\csep,20*\rsep) {\kreis};
\node (l5) at (3*\csep,19*\rsep) {\kreis};
\node (l6) at (3*\csep,18*\rsep) {\kreis};
\node (l7) at (3*\csep,17*\rsep) {\kreis};
\node (l8) at (3*\csep,16*\rsep) {\kreis};
\node (l9) at (3*\csep,15*\rsep) {\kreis};
\node (l10) at (3*\csep,14*\rsep) {\kreis};
\node (l11) at (3*\csep,13*\rsep) {\kreis};
\node (l12) at (3*\csep,12*\rsep) {\kreis};
\node (l13) at (3*\csep,11*\rsep) {\kreis};
\node (l14) at (3*\csep,10*\rsep) {\kreis};
\node (l15) at (3*\csep,9*\rsep) {\kreis};
\node (l16) at (3*\csep,8*\rsep) {\kreis};
\node (l17) at (3*\csep,7*\rsep) {\kreis};
\node (l18) at (3*\csep,6*\rsep) {\kreis};
\node (l19) at (2*\csep,5*\rsep) {\kreis};
\node (l20) at (2*\csep,4*\rsep) {\kreis};
\node (l21) at (2*\csep,3*\rsep) {\kreis};
\node (l22) at (2*\csep,2*\rsep) {\kreis};
\node (l23) at (2*\csep,1*\rsep) {\kreis};
\node (l24) at (2*\csep,0*\rsep) {\kreis};

\draw [->] 	(v0) -- node [above, sloped] {\ls{$++$}} (v1);
\draw [->] 	(v0) -- node [above, sloped] {\ls{$+-$}} (v2);
\draw [->] 	(v0) -- node [above, sloped] {\ls{$-+$}} (v3);
\draw [->] 	(v0) -- node [above, sloped] {\ls{$--$ yes}} (v4);
\draw [->] 	(v0) -- node [below, sloped] {\ls{$--$ no}} (v5);

\draw [->] 	(v1) -- node [above, sloped] {\ls{yes}} (w1);
\draw [->] 	(v1) -- node [above, sloped] {\ls{no}} (w2);
\draw [->] 	(v2) -- node [above, sloped] {\ls{yes}} (w3);
\draw [->] 	(v2) -- node [above, sloped] {\ls{no}} (w4);
\draw [->] 	(v3) -- node [above, sloped] {\ls{yes}} (w5);
\draw [->] 	(v3) -- node [above, sloped] {\ls{no}} (w6);

\draw [->] (w1) -- node [midway,xshift=3*\csep,fill=white]{\lss{avg.}} (l2);
\draw [->] (w1) -- node [above,xshift=-3*\csep,sloped] {\lss{high}} (l1);
\draw [->] (w1) -- node [below,xshift=4*\csep] {\lss{low}} (l3);

\draw [->] (w2) -- node [midway,xshift=3*\csep,fill=white]{\lss{avg.}} (l5);
\draw [->] (w2) -- node [above,xshift=-3*\csep,sloped] {\lss{high}} (l4);
\draw [->] (w2) -- node [below,xshift=4*\csep] {\lss{low}} (l6);

\draw [->] (w3) -- node [midway,xshift=3*\csep,fill=white]{\lss{avg.}} (l8);
\draw [->] (w3) -- node [above,xshift=-3*\csep,sloped] {\lss{high}} (l7);
\draw [->] (w3) -- node [below,xshift=4*\csep] {\lss{low}} (l9);

\draw [->] (w4) -- node [midway,xshift=3*\csep,fill=white]{\lss{avg.}} (l11);
\draw [->] (w4) -- node [above,xshift=-3*\csep,sloped] {\lss{high}} (l10);
\draw [->] (w4) -- node [below,xshift=4*\csep] {\lss{low}} (l12);

\draw [->] (w5) -- node [midway,xshift=3*\csep,fill=white]{\lss{avg.}} (l14);
\draw [->] (w5) -- node [above,xshift=-3*\csep,sloped] {\lss{high}} (l13);
\draw [->] (w5) -- node [below,xshift=4*\csep] {\lss{low}} (l15);

\draw [->] (w6) -- node [midway,xshift=3*\csep,fill=white]{\lss{avg.}} (l17);
\draw [->] (w6) -- node [above,xshift=-3*\csep,sloped] {\lss{high}} (l16);
\draw [->] (w6) -- node [below,xshift=4*\csep] {\lss{low}} (l18);

\draw [->] (v4) -- node [midway,xshift=3*\csep,fill=white]{\lss{avg.}} (l20);
\draw [->] (v4) -- node [above,xshift=-3*\csep,sloped] {\lss{high}} (l19);
\draw [->] (v4) -- node [below,xshift=4*\csep] {\lss{low}} (l21);

\draw [->] (v5) -- node [midway,xshift=3*\csep,fill=white]{\lss{avg.}} (l23);
\draw [->] (v5) -- node [above,xshift=-3*\csep,sloped] {\lss{high}} (l22);
\draw [->] (v5) -- node [below,xshift=4*\csep] {\lss{low}} (l24);
\end{tikzpicture}\label{fig:majorresize}
}
\caption{Two statistically equivalent staged trees for the CHDS data set.}\label{fig:major}
\end{figure}
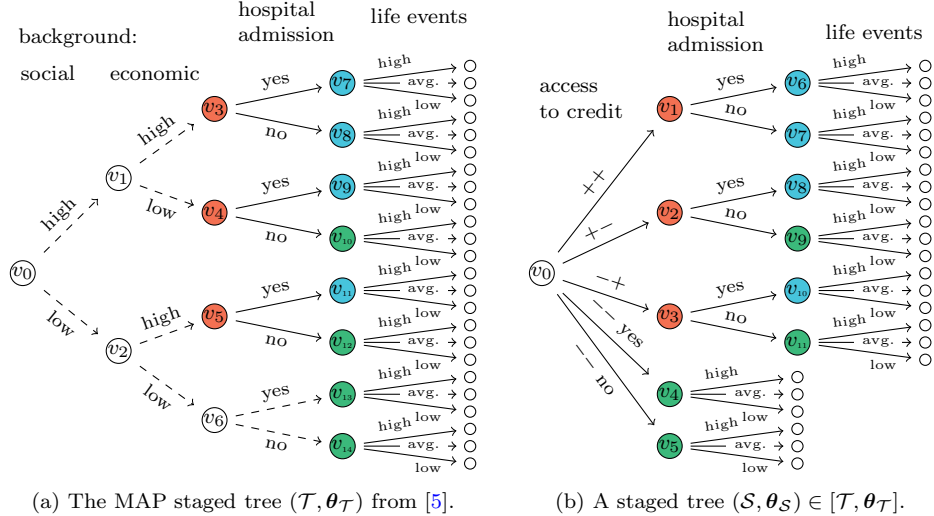

We first observe that there is a saturated subtree in $\pt$, depicted by dotted lines in the figure. Because this does not contain twins, within the polynomial equivalence class there is thus an artificial order on the variables \enquote{social background} and \enquote{economic background}. This order cannot be said to have been deduced from the model search. It is therefore helpful for us to transform $\pt$ into the statistically equivalent staged tree $\st$ of \cref{fig:majorresize}, using a resize operator as in \cref{lem:naiveresize}(a).

The root's edges $e_i=(v_0,v_i)$, $i=1,2,\ldots,5$, in this new tree $\st$ can now be assigned a meaning different from the one in $\pt$. In particular, $e_1$, $e_2$ and $e_3$ correspond to \enquote{social background or economic status are high} and $e_4$ and $e_5$ to \enquote{both social background and economic status are low, hospital admission yes or no}.
Hence, children passing along $e_1$ are \enquote{from a wealthy background}, along $e_2$ and $e_3$ are \enquote{from a moderately wealthy background} and along $e_4$ and $e_5$ are \enquote{from a poor background}. From the stage structure of $\st$ we can see that the probabilities of certain numbers of life events differ between wealthy and poor children. Interestingly, \cite{poverty} names the \emph{access to credit} as a possible monetary measurement of poverty. So being able to borrow from a social network or having own savings is a natural indicator of wealth. This gives some external support for moving from $\pt$ to $\st$, suggested from the results of our automated MAP search on the CHDS data.

We next analyze the polynomial equivalence class $[\St,\Theta_{\St}]^c$ for $c=c_{\St}$. There are five twins in this tree which have two children in the same stage. These are the ones where $v_1$, $v_2\in u_{\text{red}}$, $v_1$, $v_3\in u_{\text{red}}$, $v_2$, $v_3\in u_{\text{red}}$, $v_4$, $v_5\in u_{\text{green}}$ and $v_6$, $v_7\in u_{\text{blue}}$ have the same parent and are in the same stage, respectively. For most of these, an application of the swap operator would be na\"ive and violate the stage constraints in the tree. In fact, there are only two swaps which yield a staged tree: the composition which performs a floret-swap on $v_1$, $v_2$ and $v_3$ simultaneously and the swap on $v_6$, $v_7$. These change the order between access to credit and hospital admission for (moderately) wealthy children, and between hospitalisation and life events for wealthy children. It would thus be spurious to assert a potentially causal or chronological order on these events on the basis of the MAP search.

There is no staged tree in the polynomial equivalence class of $\st$ that would allow for the total order \enquote{life events before hospitalisation}. This is because no composition of the swaps on the twins can form a level-swap on $\st$. So a model which treats life events as an explanatory variable of the response variable hospital admission as in the study~\cite{lorna:bnCEG} is less supported by the data than one treating hospitalisation as an explanatory variable of life events as in~\cite{cowelljim:causal}. Note that no deductions about an ordering of variables would have been possible within the original BN representation of the data because the MAP model turns out to be decomposable. This demonstrates that the extra structure of the staged tree enables us to draw out new potential causal hypotheses that could not be discovered when using more conventional graphical methods.

\section{Discussion}\label{sect:broader}
In this paper we have been able to show that a characterization of staged trees in terms of their interpolating polynomials provides an elegant way to fully analyze statistical equivalence classes of models represented by such trees. 

For a future implementation of these results it is important to note that the number of tree-compatible factorizations of an interpolating polynomial is usually enormous. For instance, a na\"ive count of the elements in the class analyzed in \cref{sect:causal} reveils nearly one thousand elements. This is because every polynomial equivalence class contains $2^{\#\text{twins}}$ elements arising from na\"ive swaps, each of which can be combined with resize operations as outlined above. Of course we would then need to check how many of these elements actually correspond to staged trees. This will normally reduce the number of amenable tree-compatible factorizations significantly: for instance in that example there would be $2^5=32$ na\"ive representations, only four of which are staged.

The polynomial-based approach we develop here provides a most promising foundation for developing an efficient search across this class using computational algebra. In particular, given any discrete model with multilinear parametrization, we note that every possible tree-compatible factorization of its interpolating polynomial arises from a certain nested order of common divisors of terms in the polynomial. This nesting is naturally reflected in what is called the \emph{primary decomposition} of the ideal spanned by all terms in the polynomial. Every element of such a decomposition which is spanned by degree-one indeterminates will then provide a set of putative root labels of a corresponding tree representation; and if there are no such candidates then the multilinear model would not be a staged tree model. Investigating subnestings of ideals and running a search over putative root-labels obtained from ideal decomposition is much faster than for instance an exhaustive search over all possible nested factorizations of a polynomial. This is because in such an unstructured search we would have in the order of $2^d$ choices of root labels, one for each subset of labels, where $d$ is the number of indeterminates. Ideal decomposition provides us with much fewer candidate nestings and also provides an elegant way of a priori excluding certain na\"ive representations which are not staged. As a consequence, we can employ a vast range of freely available software to design algorithms which can efficiently traverse a polynomial equivalence class---so an algorithmic implementation for the swap operator is within reach. As for the resize operator, the computational-algebra algorithm suggested here will then need to be enhanced by a command which allows us to leave a fixed algebraic framework (given by the chosen parametrization) and to substitute terms using the requirements for having non-na\"ive resizes we discovered in this paper. The design of these algorithms is outside the scope of this publication.

In a second step, once an algorithm as sketched above is in place and we have software for applying swaps and resizes on a staged tree, we are ready to use these results in inference and model selection. In particular, when data is available we can now develop methods to score an interpolating polynomial (rather than a tree graph) directly and use the methods proposed here to then traverse the whole statistical equivalence class purely algebraically.
An important direction for future work is to demonstrate how interpolating polynomials can thus be used in the analysis of tree-based causality, in comparison to analogous concepts developed for BN models.

\section*{Acknowledgements}
Christiane G\"orgen was supported by the EPSRC grant EP/L505110/1 and Jim Q.~Smith was supported by the EPSRC grant EP/K039628/1.

\bibliographystyle{imsart-nameyear}
\bibliography{bj_goergensmith_bib.bib}
\end{document}